\newtheorem{lemma}{Lemma}[section]
\newtheorem{theorem}{Theorem}[section]
\newtheorem{definition}{Definition}[section]
\newtheorem{proposition}{Proposition}[section]
\newtheorem{corollary}{Corollary}[section]
\numberwithin{equation}{section}
\def\upp#1{{\overline{#1}}}
\def\low#1{{\underline{#1}}}
\def\floor#1{\left\lfloor #1 \right\rfloor}
\def\ceil#1{\left\lceil #1 \right\rceil}
\def\brackets#1{\left( #1 \right)}
\def\medbrackets#1{\left[ #1 \right]}
\def\bigbrackets#1{\left\{ #1 \right\}}
\def\length#1{\left| #1 \right|}
\def\absolutevalues#1{\left| #1 \right|}
\def\remark#1{\noindent {\bf Remark.} #1}
\newcommand{\NN}{\mathbb{N}}
\newcommand{\RR}{\mathbb{R}}
\newcommand{\eps}{\varepsilon}
\newcommand{\mathdot}{~:~}
\newcommand{\upplim}{\varlimsup}
\newcommand{\lowlim}{\varliminf}
\newcommand{\proj}{\text{Proj}}
\newcommand{\CE}{Chung-Erd\H{o}s }
\newcommand{\DS}{Duffin-Schaeffer }
\title{Dimensions of the popcorn graph}
\author{Haipeng Chen$^1$, Jonathan M. Fraser$^2$, and Han Yu$^3$}
\affil{$^1$College of Big Data and Internet, Shenzhen Technology University, \\ Shenzhen, China \\
e-mail: hpchen0703@foxmail.com}
\affil{$^2$School of Mathematics and Statistics, University of St Andrews, UK \\
	e-mail: jmf32@st-andrews.ac.uk}
\affil{$^3$Department of Pure Mathematics and Mathematical Statistics, \\ University of Cambridge, UK \\
	e-mail: hy351@math.cam.ac.uk}
\begin{document}
	
	\maketitle
	
	\pagenumbering{arabic}

	\begin{abstract}
		The `popcorn function' is a well-known and important example in real analysis with   many interesting features.  We prove that the box dimension of the graph of the popcorn function is 4/3, as well as computing the Assouad dimension and Assouad spectrum.  The main ingredients include Duffin-Schaeffer type estimates from  Diophantine approximation and the Chung-Erd\H{o}s inequality from probability theory.
		\\ \\
		\emph{Key words and phrases:} popcorn function, Thomae function, box dimension, Assouad dimension, Assouad spectrum.\\
		\emph{Mathematics Subject Classification 2010:} primary: 28A80; secondary: 11B57.
	\end{abstract}

	\section{The popcorn function}
	
	The \emph{popcorn function} $f: [0,1] \to \mathbb{R}$ is defined by
	\begin{equation}
	f(x) = 
	\begin{cases}
	\frac{1}{q} & \text{ if $x = \frac{p}{q}$ where $\gcd{(p,q)}=1$ and $1 \leq p < q$} \\
	0 & \text{ otherwise.}
	\end{cases}
	\end{equation}
	 It is an important pedagogical example in real analysis and is  also known as {\it Thomae's function, the   raindrop function, and the modified Dirichlet function}, etc. This function has many interesting properties, such as being continuous at the  irrationals but discontinuous at the  rationals in $(0,1)$.  It also provides an example of a Riemann integrable function which is not continuous on any open interval.  We write 
	\[
	G_f := \{(x,f(x)) ~:~ x \in [0,1] \}
	\]
	to denote the  \emph{graph} of the popcorn function, which we refer to as the {\it popcorn graph}. The restriction of the popcorn graph to the rationals provides a simple example of a discrete set whose closure has positive length.
	
	We also define the {\it full popcorn set} by
	\begin{equation}
	F = \left\{(p/q, 1/q) : p, q \in \mathbb{N} \text{ with } p <q\right\} \cup ([0,1] \times \{0\})
	\end{equation} 
	It is clear that $G_f \subset  F  \subset [0,1]^2.$  We will see below that the popcorn graph and full popcorn set have the same dimensions and so we include the full popcorn set in our analysis for completeness.
	
	\begin{figure}[htbp]
		\centering
		\begin{minipage}[t]{0.49\textwidth}
			\centering
			\includegraphics[width=0.9\textwidth]{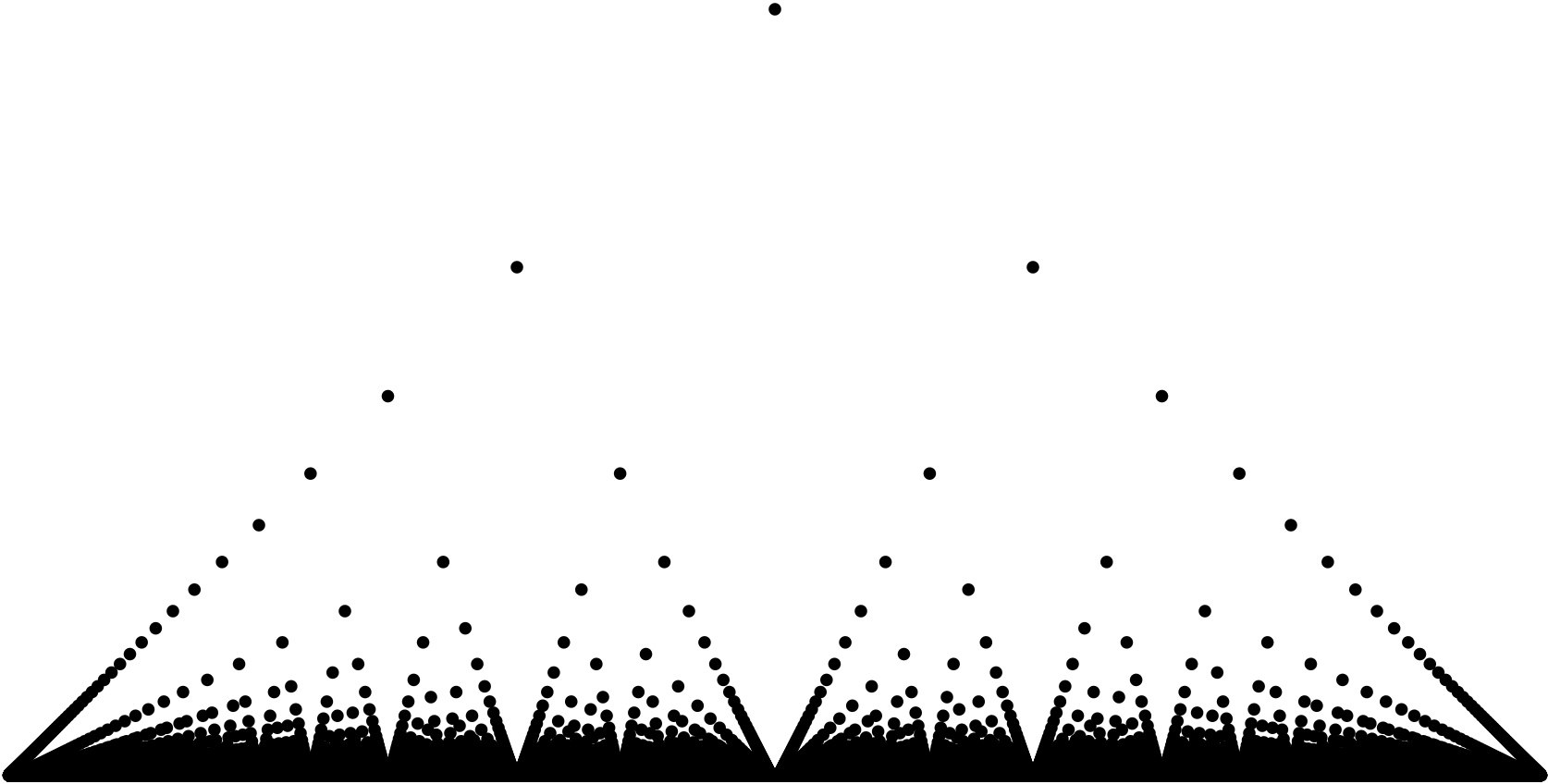}
			\caption{Popcorn graph}
		\end{minipage}
		\begin{minipage}[t]{0.5\textwidth}
			\centering
			\includegraphics[width=0.9\textwidth]{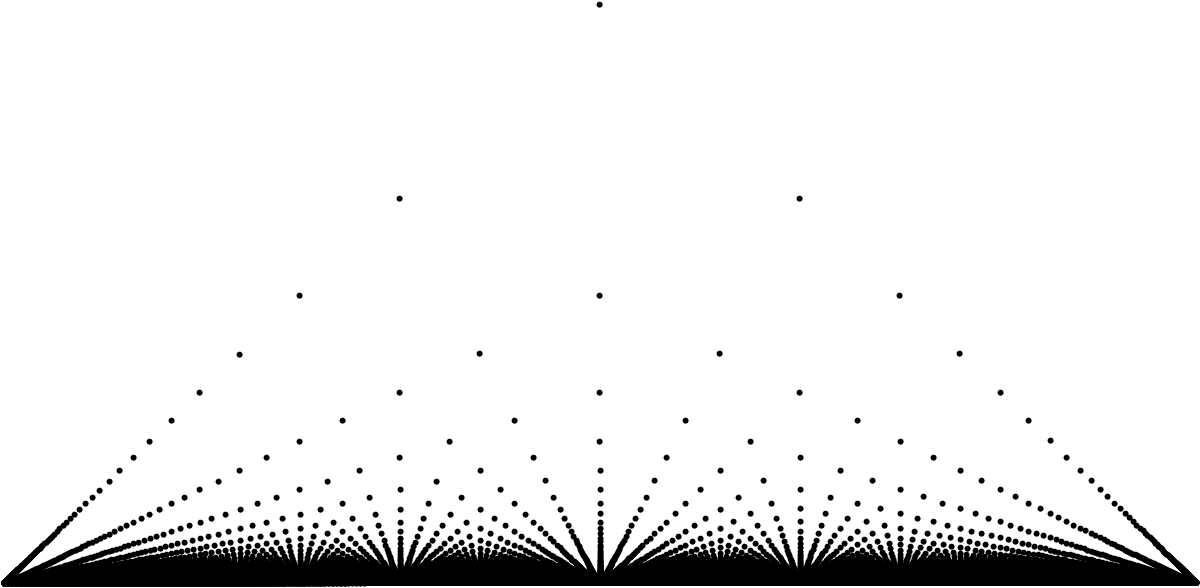}
			\caption{Full popcorn set}
		\end{minipage}
	\end{figure}
	\section{Box dimension of the popcorn graph}
	
	In this section, we discuss the box dimension of the popcorn graph. We start with the definition of box dimension. More discussion on the definition and properties of box dimension can be found in \cite{F2004}.  We remark that the \emph{Hausdorff} dimensions of the popcorn graph and full popcorn set are trivially 1 since Hausdorff dimension is is countably stable. 
	
	\begin{definition}
		Let $X \subset \RR^d$ be a non-empty bounded set.  For any $0 < \delta < \length{X}$, where $\length{X}$ is the diameter of $X$, we write $N_\delta(X)$ to denote  the smallest number of closed cubes with side length $\delta$ needed to cover the set $X$. The \emph{upper and  lower box dimensions} of $X$ are defined, respectively, as 
		\[
		\upp{\dim}_B X  = \upplim_{\delta \to 0} \frac{\log N_{\delta} (X)}{-\log \delta}; \qquad
		\low{\dim}_B X = \lowlim_{\delta \to 0} \frac{\log N_{\delta} (X)}{-\log \delta} .
		\]
If the upper and lower box dimensions coincide, then we write $\dim_B X$ for the common value, referring simply to the \emph{box dimension} of $X$.
	\end{definition}
  We will prove the following result in Section \ref{proof1}.
\begin{theorem}\label{dim_B_result}
	The box dimensions of the popcorn graph and full popcorn set are $4/3$, that is, $\dim_B F  = \dim_B G_f = 4/3.$ \end{theorem}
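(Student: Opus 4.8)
The plan is to reduce to $\delta = 1/N$ with $N \in \NN$ (general $\delta$ follows from monotonicity of $\delta\mapsto N_\delta$, up to bounded multiplicative constants) and to count the cells of the $\delta$-mesh that meet $F$, which changes $N_\delta$ by at most a bounded factor. Partition $[0,1]^2$ into the $N$ horizontal strips $S_j = [0,1]\times[j\delta,(j+1)\delta)$, $j=0,1,\dots,N-1$. A point $(p/q,1/q)$ lies in $S_j$ exactly when $q\in I_j:=(N/(j+1),N/j]$; moreover every point of $F$ of height $<\delta$ (those with $q>N$) together with all of $[0,1]\times\{0\}$ lies in $S_0$, which meets exactly $N$ mesh cells. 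For $j\geq1$ let $A_j:=\{p/q:q\in I_j,\ 1\le p<q,\ \gcd(p,q)=1\}$ be the set of relevant $x$-coordinates and let $M_j$ be the number of $\delta$-mesh intervals of the $x$-axis that meet $A_j$; then the number of mesh cells meeting $S_j\cap F$ equals $M_j$, so $N_\delta(F)\asymp N+\sum_{j\geq1}M_j$. Using $\sum_{q\le x}\phi(q)=\frac{3}{\pi^2}x^2+O(x\log x)$ one gets $\#A_j=\sum_{q\in I_j}\phi(q)\lesssim N^2/j^3$ always, with $\#A_j\asymp N^2/j^3$ when $1\le j\le N^{1/3}$; together with the trivial $M_j\le N$, the theorem reduces to the estimate $M_j\asymp\min\{N,\,N^2/j^3\}$, the cross-over occurring at $j\asymp N^{1/3}$.

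The upper bound $\overline{\dim}_B F\le 4/3$ is then immediate, since $M_j\le\min\{N,\#A_j\}\lesssim\min\{N,N^2/j^3\}$ gives
\[
N_\delta(F)\;\lesssim\;N+\sum_{1\le j\le N^{1/3}}N+\sum_{j>N^{1/3}}\frac{N^2}{j^3}\;\lesssim\;N^{1/3}\cdot N+N^2\cdot(N^{1/3})^{-2}\;\lesssim\;N^{4/3}.
\]
Because $G_f\subseteq F$, the same bound holds for $G_f$, so $\overline{\dim}_B G_f\le 4/3$ as well.

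For the lower bound it is enough to show $M_j\gtrsim N$ for every $1\le j\le N^{1/3}$; this is where Diophantine input enters. Fix such a $j$, choose $x\in[0,1]$ uniformly at random, and for each reduced fraction $p/q\in A_j$ let $E_{p/q}$ be the event that $x$ lies in the same $\delta$-mesh interval as $p/q$, so $\Pr(E_{p/q})=\delta$ and $M_j=\delta^{-1}\Pr\big(\bigcup_{p/q\in A_j}E_{p/q}\big)$. By the Chung-Erd\H{o}s inequality,
\[
\Pr\Big(\bigcup_{p/q\in A_j}E_{p/q}\Big)\;\geq\;\frac{\big(\sum_{p/q\in A_j}\Pr(E_{p/q})\big)^2}{\sum_{p/q,\,p'/q'}\Pr(E_{p/q}\cap E_{p'/q'})}\;=\;\frac{(\delta\,\#A_j)^2}{\delta\,P_j},
\]
where $P_j:=\#\{(p/q,p'/q'):q,q'\in I_j\text{ reduced, both lying in one common }\delta\text{-mesh interval}\}$ (note that if $p/q\neq p'/q'$ lie in a common interval then $E_{p/q}=E_{p'/q'}$, so the only nonzero off-diagonal contributions are the $P_j$ collisions, each of weight $\delta$). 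Hence $M_j\geq(\#A_j)^2/P_j$, and it suffices to prove the overlap bound $P_j\lesssim N^3/j^6$: then $M_j\gtrsim (N^2/j^3)^2/(N^3/j^6)=N$, and summing over $1\le j\le N^{1/3}$ gives $N_\delta(F)\gtrsim N^{4/3}$; since all these points already lie in $G_f$, the same holds for $G_f$, and combined with the upper bound we conclude $\dim_B F=\dim_B G_f=4/3$.

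The overlap bound $P_j\lesssim N^3/j^6$ is the crux and is a \DS type estimate, and I expect it to be the main obstacle. The diagonal contributes $\#A_j\asymp N^2/j^3$, which is negligible against $N^3/j^6$ when $j\le N^{1/3}$. For distinct reduced fractions, $p/q$ and $p'/q'$ lie within $\delta$ of one another iff $0<|pq'-p'q|<\delta qq'$; writing $g=\gcd(q,q')$, one needs $g\mid(pq'-p'q)$, the linear equation $pq'-p'q=m$ has $\asymp g$ solutions $(p,p')$ with $1\le p<q$, $1\le p'<q'$ whenever $g\mid m$, and the admissible $m$ number $\asymp\delta qq'/g$. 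Hence pairs $(q,q')$ with $\gcd(q,q')=g$ contribute $\asymp g\cdot(\delta qq'/g)\cdot\#\{(q,q')\in I_j^2:\gcd=g\}\asymp\delta\,(N/j)^2\cdot(N/j^2)^2\,g^{-2}=N^3 j^{-6}g^{-2}$, and $\sum_{g\geq1}g^{-2}<\infty$ yields $P_j\lesssim N^3/j^6$. The delicate part is making the implied constants uniform over the short range $I_j$ — controlling the averages of $\phi$ and of the $\gcd$-sums there, and discarding the negligible regime $g\gtrsim\delta qq'$ — which is precisely the reduced-fraction counting and GCD-sum machinery of the \DS circle of ideas; this is the content of Section \ref{proof1}.
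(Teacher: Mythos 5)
Your proposal is correct and follows essentially the same route as the paper: decompose into horizontal strips of height $\delta$, play the cardinality of a strip against the trivial covering bound with crossover at height $\delta^{2/3}$ for the upper bound, and for the lower bound apply the Chung--Erd\H{o}s inequality to the $\delta$-neighbourhoods of the reduced fractions in each strip together with a Duffin--Schaeffer overlap estimate --- your bound $P_j \lesssim N^3/j^6$ is precisely \cite[Lemma 2]{DS1941}, which the paper cites rather than re-derives, and your gcd-decomposition sketch of it is sound. The only differences are minor: you run the lower bound over the strips $1 \le j \le N^{1/3}$ whereas the paper uses the complementary range $N^{1/3} \le k \le N^{1/2-\eps}$ (both work, with the dominant contribution at $k \approx N^{1/3}$ either way), and your use of the average order $\sum_{q\le x}\phi(q) = \frac{3}{\pi^2}x^2 + O(x\log x)$ in place of the pointwise bound $\phi(n) \gtrsim n/\log\log n$ neatly avoids the $\eps$-losses in the paper's argument.
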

Both the popcorn graph and full popcorn set have a fractal structure.  In fact, they can be used to exhibit some interesting phenomena in fractal geometry, for example, that the modified lower dimension is not stable under closure, see \cite[Section 3.4.2]{F2020}. Despite its sustained relevance and appearance in analysis and fractal geometry,  the box dimension of the popcorn graph was  unknown.   The proof we found relies on a delicate  counting argument introduced   by Duffin and Schaeffer in \cite{DS1941}. We also make extensive use of the  Chung-Erd\H{o}s inequality from probability theory.    The popcorn graph is clearly related to the set $E=\{1/n : n \in \mathbb{N}\}$ which is often used as a `first example' when studying the box dimension of fractals.  It is straightforward, but instructive, to show that  $	{\dim}_B E = 1/2.$ Computation of the box dimension of the popcorn graph is, by comparison, rather harder.

	\section{Assouad spectrum and Assouad dimension of the  popcorn graph} \label{assouadsection}
In this section, we discuss the Assouad spectrum  and Assouad dimension of the popcorn graph. The Assouad dimension can be viewed as a `local box dimension' where only covers of small neighbourhoods of the set are considered.  The Assouad spectrum fixes the relationship between the size of the neighbourhood and the covering scale using the  parameter $\theta \in (0,1)$ and as the parameter varies the Assouad spectrum interpolates between the box and Assouad dimensions in a meaningful sense explained below.  It is therefore very natural to also consider the Assouad spectrum of the popcorn graph. 
\begin{definition}
	Let $X \subset \RR^d$ be a non-empty  set. For any $x = \brackets{x_1, \dots, x_d} \in \RR^d$ and any $R > 0$, we write $C(x,R) = \medbrackets{x_1,x_1+R} \times\dots \times \medbrackets{x_d,x_d+R}$.	The Assouad dimension of $X$ is defined by
	\[
	\begin{aligned}
	\dim_A X = \inf \{ s \geq 0  \mathdot & \text{there exists a constant $c > 0$, such that} \\
	& \hspace{-1cm}\text{ for all }0 < r < R \text{ and all } x \in \RR^d, 
	N_r(C(x,R) \cap X) \leq c \brackets{R/r}^s \}.
	\end{aligned}
	\]
	For  $0 < \theta < 1$, the Assouad spectrum of $X$ is defined by
	\[
	\begin{aligned}
	\dim_A^\theta X = \inf \{ s \geq 0  \mathdot & \text{there exists a constant $c > 0$, such that } \\
	& \hspace{-2cm} \text{ for all } 0 < R < 1 \text{ and all } x \in \RR^d, N_{R^{\frac{1}{\theta}}}(C(x,R) \cap X) \leq c \,  R^{(1-\frac{1}{\theta})s} \}.
	\end{aligned}
	\]
\end{definition}
	We refer the reader to \cite{F2020,FHHTY2018,FY2016A} for more details and background on the  Assouad dimension and spectrum.  We note that $\dim_A^\theta X $ is continuous in $\theta \in (0,1)$ and tends to the upper box dimension as $\theta \to 0$ and to the quasi-Assouad dimension as $\theta \to 1$.  The quasi-Assouad dimension,  introduced in \cite{luxi} and denoted by $\dim_{qA}$, is related to the Assouad dimension, and for many sets of interest, the two notions coincide.   This will be the case here.  It is also useful to note that for all non-empty bounded $X$ and all $\theta \in (0,1)$
	\begin{equation} \label{dimss}
	\upp{\dim}_B X \leq \dim_A^\theta X \leq \dim_{qA} X  \leq \dim_A X.
	\end{equation}
	 The techniques we used to deal with  the box dimension may also be used to study the Assouad spectrum, but the argument becomes rather more complicated. 	We will  prove the following result in Section \ref{proof2}.
	\begin{theorem}\label{A_spe_result}
		The Assouad spectrum of the popcorn graph is
		\[
		\dim_A^\theta G_f = \dim_A^\theta  F  =
		\begin{cases}
		\frac{\frac{4}{3}-\theta}{1-\theta} & \theta \in \left(0, \frac{2}{3} \right)\\
		2 & \theta \in [\frac{2}{3}, 1)
		\end{cases}
		\]
	\end{theorem}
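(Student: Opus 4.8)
\emph{Proof strategy.} The plan is to estimate $N_{R^{1/\theta}}(C(x,R)\cap F)$ for every $R\in(0,1)$ and every $x$, to prove a matching lower bound for a convenient family of boxes, and then to read off the spectrum. Since $G_f\subseteq F$, it suffices to prove the upper bounds for $F$ and the lower bounds for $G_f$. Throughout write $r=R^{1/\theta}<R$, so that $N_r(C(x,R)\cap F)$ is the quantity in the definition; the two target exponents then correspond, under this substitution, to $N_r(C(x,R)\cap F)\asymp R^{(1-1/\theta)s}=R\,r^{-4/3}$ when $s=\frac{4/3-\theta}{1-\theta}$, and to $N_r(C(x,R)\cap F)\asymp (R/r)^2$ when $s=2$. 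The decisive boxes will be those abutting the $x$-axis, where the accumulation of the points $(p/q,1/q)$ with large $q$ produces genuinely two-dimensional behaviour; note that $\dim_A^\theta F\le 2$ is automatic because $F$ lies in the plane.

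\emph{Upper bound.} Fix $C(x,R)$ and slice $C(x,R)\cap F$ by the horizontal strips $S_j=\RR\times[jr,(j+1)r)$, $j\ge 0$. A point of $F\cap S_j$ has the form $(p/q,1/q)$ with $q$ in an interval of length $\asymp (rj^2)^{-1}$ about $(jr)^{-1}$, so there are $O(1+(rj^2)^{-1})$ admissible $q$, and for each such $q$ at most $O(1+R/(jr))$ numerators $p$ with $p/q$ in the horizontal projection of $C(x,R)$. Since distinct pairs $(p,q)$ give distinct points of $F$, the number of $r$-mesh cubes meeting $F\cap C(x,R)\cap S_j$ is at most
\[\min\!\brackets{\ \frac{R}{r}\,,\ O\!\brackets{\brackets{1+\tfrac{1}{rj^2}}\brackets{1+\tfrac{R}{jr}}}\ },\]
the first term being the trivial bound from the $\asymp R/r$ columns, and the exceptional strip $S_0$ (with $[0,1]\times\{0\}$, when it is met) contributing only $O(R/r)$. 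This per-strip bound is non-increasing in $j$ and $C(x,R)$ meets only $O(R/r)$ consecutive strips, so summing over $j=1,\dots,O(R/r)$ — splitting the sum at the level $j\asymp r^{-1/3}$ where the two terms of the minimum balance — yields
\[N_r(C(x,R)\cap F)\ \lesssim\ R\,r^{-4/3}\ \text{ if }\ \theta\le\tfrac23,\qquad N_r(C(x,R)\cap F)\ \lesssim\ (R/r)^2\ \text{ if }\ \theta>\tfrac23,\]
the dichotomy being exactly whether $r^{-1/3}\le R/r$. Translating exponents gives $\dim_A^\theta F\le\frac{4/3-\theta}{1-\theta}$ for $\theta\in(0,\tfrac23]$ and $\dim_A^\theta F\le 2$ for $\theta\in(\tfrac23,1)$.

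\emph{Lower bound.} Take $Q=[0,R]\times[0,R]$ (any horizontal translate works), and show that at each suitable level $j$ a positive proportion of the $\asymp R/r$ columns of $Q$ met by $S_j$ contain a point of $G_f$. Put the uniform probability measure on these columns, and for each integer $q$ with $((j+1)r)^{-1}\le q\le (jr)^{-1}$ let $A_q$ be the event that the chosen column contains a reduced fraction $p/q$. Since fractions with denominator $q$ are $\asymp jr\ge r$ apart, $\mathbb P(A_q)$ is $\asymp r/R$ times the number of reduced $p/q$ in $[0,R]$; summing over the admissible range of $q$ — using $\sum_{q\in I}\varphi(q)\asymp |I|\max I$ for intervals $I$ of length $\gtrsim 1$, together with equidistribution of Farey points in windows of width $R$, which is valid here because $R\gtrsim r^2j^3$ — gives $\sum_q\mathbb P(A_q)\asymp (rj^3)^{-1}$, which is $\gtrsim 1$ whenever $j\lesssim r^{-1/3}$. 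One then combines the \CE inequality
\[\mathbb P\!\brackets{\bigcup_q A_q}\ \ge\ \frac{\brackets{\sum_q\mathbb P(A_q)}^{2}}{\sum_{q,q'}\mathbb P(A_q\cap A_{q'})}\]
with a \DS type quasi-independence bound $\sum_{q,q'}\mathbb P(A_q\cap A_{q'})\lesssim\brackets{\sum_q\mathbb P(A_q)}^{2}+\sum_q\mathbb P(A_q)$ — the mechanism used to prove Theorem \ref{dim_B_result}, transplanted to a window of width $R$ at scale $r$ — to conclude $\mathbb P(\bigcup_q A_q)\gtrsim 1$. Multiplying over the $\gtrsim r^{-1/3}$ levels $1\le j\le\tfrac12 r^{-1/3}$ when $\theta\le\tfrac23$, and over all $\asymp R/r$ levels $1\le j\le R/r$ when $\theta>\tfrac23$ (where $\sum_q\mathbb P(A_q)\asymp (rj^3)^{-1}\gtrsim r^2/R^3\ge 1$ still holds), yields $N_r(Q\cap G_f)\gtrsim R\,r^{-4/3}$, respectively $N_r(Q\cap G_f)\gtrsim (R/r)^2$. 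Hence $\dim_A^\theta G_f\ge\frac{4/3-\theta}{1-\theta}$ for $\theta\in(0,\tfrac23]$ and $\dim_A^\theta G_f\ge 2$ for $\theta\in(\tfrac23,1)$; with the upper bounds and $G_f\subseteq F$ this gives the theorem. (Since $\frac{4/3-\theta}{1-\theta}\to 2$ as $\theta\to\tfrac23$, the two formulae agree at $\theta=\tfrac23$, consistent with continuity of the Assouad spectrum in $\theta$.)

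The main obstacle is the \DS type quasi-independence estimate in this windowed setting: one must bound $\sum_{q,q'}\mathbb P(A_q\cap A_{q'})$ uniformly over horizontal windows, with an explicit and favourable dependence on $R$, $r$ and $j$. This hinges on determining precisely when reduced fractions with denominators $q$ and $q'$ can lie in a common $r$-column — which is governed by $\gcd(q,q')$ and $\operatorname{lcm}(q,q')$ — and then summing the resulting arithmetic function over the relevant dyadic range of denominators, essentially as in \cite{DS1941}. The extra difficulty compared with the box-dimension computation is the presence of two independent scales $R$ and $r$ rather than one, which makes the bookkeeping heavier and, in particular, makes it delicate to verify that the diagonal-type term $\sum_q\mathbb P(A_q)$ is genuinely dominated by $\brackets{\sum_q\mathbb P(A_q)}^{2}$ in every regime of $\theta$.
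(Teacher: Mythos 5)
Your upper bound is correct and is essentially the paper's argument in disguise: summing the minimum of the trivial covering bound $R/r$ and the cardinality bound over horizontal strips, with the crossover at $j\asymp r^{-1/3}$, is exactly the paper's decomposition of $C(x,R)\cap F$ at height $R^{2/(3\theta)}=r^{2/3}$. The problem is the lower bound, and specifically the step you yourself flag as "the main obstacle": the quasi-independence estimate $\sum_{q,q'}\mathbb{P}(A_q\cap A_{q'})\lesssim(\sum_q\mathbb{P}(A_q))^2+\sum_q\mathbb{P}(A_q)$ for events defined by restricting the sets $E_q$ to a window of width $R\ll 1$. This does not follow from the Duffin--Schaeffer lemma you cite: that lemma is a global statement on $[0,1]$, and a windowed version $\mu(E_q\cap E_{q'}\cap[x,x+R])\lesssim R\,\psi(q)\psi(q')$ uniformly in $x$ requires knowing that the solution set of $|pq'-p'q|\leq q'\psi(q)+q\psi(q')$ (with both coprimality constraints) equidistributes at scale $R$ inside the window; the same issue already afflicts your first-moment claim that the number of reduced $p/q$ in a window of width $R$ is $\asymp R\varphi(q)$ uniformly in $q$. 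Since the whole lower bound (and hence the theorem) rests on this unproven estimate, the proof is incomplete as it stands.

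The paper circumvents precisely this difficulty by changing coordinates rather than windowing. It takes the specific square $[\frac{1}{n+1},\frac1n]\times[0,\frac{1}{n(n+1)}]$ and uses the "collapsed view'': the popcorn points there are $\bigl(\frac{l}{ln+i},\frac{1}{ln+i}\bigr)$ with $\gcd(i,l)=1$, grouped by the lines $y=x/l$. The horizontal positions $\frac{l}{ln+i}=\frac{1}{n+i/l}$ are parametrised by the \emph{complete} Farey fractions $i/l$ rescaled into the window, so the overlap of the natural covers attached to levels $l\neq l'$ reduces, after the distortion estimate $\bigl|\frac{l}{ln+i}-\frac{l'}{l'n+i'}\bigr|\geq\frac{1}{(n+1)^2}\bigl|\frac il-\frac{i'}{l'}\bigr|$, to the classical global Duffin--Schaeffer pair count for $|i/l-i'/l'|\leq 2(n+1)^2\delta$ (Proposition \ref{local_D-S_esti}). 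No equidistribution in short intervals is needed. The Chung--Erd\H{o}s step then runs exactly as in the box-dimension proof. (The paper also sidesteps your direct argument for $\theta\geq 2/3$, which inherits the same gap, by invoking continuity of the Assouad spectrum and the general fact that once the spectrum attains a value it cannot decrease, together with the trivial bound $\dim_A^\theta F\leq 2$.) To repair your proof you would either need to prove the windowed Duffin--Schaeffer estimate with uniform dependence on $x$, $R$, $r$, $j$ --- a nontrivial piece of analytic number theory --- or switch to the collapsed parametrisation as the paper does.
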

	We get the following immediate corollary concerning Assouad and quasi-Assouad dimension by appealing to \eqref{dimss}.
	\begin{corollary}
		We have $\dim_{\text{q}A} G_f = \dim_{\text{q}A}  F  = \dim_A G_f = \dim_A  F  = 2.$
	\end{corollary}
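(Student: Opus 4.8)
The plan is to deduce this corollary directly from Theorem \ref{A_spe_result}, so essentially no new work is required. First I would record that Theorem \ref{A_spe_result} gives $\dim_A^\theta G_f = \dim_A^\theta F = 2$ for every $\theta \in [2/3,1)$; fixing, say, $\theta = 2/3$ will suffice for what follows.

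Next I would invoke the general inequality \eqref{dimss}, which for any bounded set $X$ and any $\theta \in (0,1)$ reads $\upp{\dim}_B X \leq \dim_A^\theta X \leq \dim_{qA} X \leq \dim_A X$. Applied with $\theta = 2/3$ to $X \in \{G_f, F\}$, this already yields the lower bounds $\dim_{qA} X \geq 2$ and $\dim_A X \geq 2$. To close the argument I would then note the elementary ambient bound $\dim_A X \leq 2$, valid for every $X \subseteq \RR^2$: this is immediate from the definition of Assouad dimension, since a side-$R$ cube is covered by at most $\lceil R/r\rceil^2 \leq (2R/r)^2$ side-$r$ cubes when $0 < r < R$ (see also \cite{F2020}). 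Combining these observations,
\[
2 = \dim_A^{2/3} X \leq \dim_{qA} X \leq \dim_A X \leq 2,
\]
forcing equality throughout and proving the claim for both $X = G_f$ and $X = F$.

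I do not anticipate any genuine obstacle here: the entire substance is contained in Theorem \ref{A_spe_result}, and the deduction is a one-line squeeze between the spectrum value at $\theta = 2/3$ and the trivial ambient upper bound. The only point worth a moment's care is to work at a concrete $\theta \in [2/3,1)$ rather than passing to the limit; the limiting route also works, since $\dim_A^\theta X \to \dim_{qA} X$ as $\theta \to 1$, giving $\dim_{qA} G_f = \dim_{qA} F = 2$ directly, but even then one still needs the bound $\dim_A X \leq 2$ to pin down the Assouad dimension itself, so the ambient bound is genuinely required somewhere in the argument.
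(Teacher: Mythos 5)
Your proposal is correct and is exactly the argument the paper intends: the corollary is deduced from Theorem \ref{A_spe_result} via the chain of inequalities \eqref{dimss}, together with the trivial ambient bound $\dim_A X \leq 2$ for $X \subseteq \RR^2$ (which the paper leaves implicit). No gaps; your write-up simply makes explicit what the paper calls an ``immediate corollary.''
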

	\section{Preliminaries and notation}

	\subsubsection*{Computing the box dimension}
	Suppose  $\bigbrackets{\delta_n}_{n=1}^{\infty}$ is a strictly positive decreasing sequence of  real numbers  such that there exists a constant $0 < c < 1$ such that for any $n \geq 1$, we have $\frac{\delta_{n+1}}{\delta_n} \geq c > 0$. It is straightforward to show that the box dimensions for $X\subset\mathbb{R}^d$ can be computed by 
	\[
	\upp{\dim}_B X  = \upplim_{n \to \infty} \frac{\log N_{\delta_n} (X)}{-\log \delta_n}; \qquad
	\low{\dim}_B X  = \lowlim_{n \to \infty} \frac{\log N_{\delta_n} (X)}{-\log \delta_n},
	\]
	that is, it is sufficient to let $\delta \to 0$ through the sequence $\delta_n$. 
	\subsubsection*{Layer structures of the popcorn graph}
		It will be useful to keep in mind the following two expressions for the popcorn graph. In what follows, observe that $\gcd(1,1)=1.$  First note that we need only to consider the popcorn graph restricted to the rationals since the box dimension (and Assouad spectrum) is stable under taking closure.  The `horizontal view' of the popcorn graph (restricted to the rationals) is
	\[
	G_f \cap \brackets{[0,1] \times (0,1]} = \bigcup_{n=2}^{\infty} \bigbrackets{ \brackets{\frac{i}{n}, \frac{1}{n}} \mathdot \gcd{(i,n) = 1}, 1 \leq i \leq n}
	\]
	and the `collapsed view' is
	\[
	G_f \cap \brackets{\medbrackets{0,1/2}\times (0,1]}  = \bigcup_{l=1}^{\infty}  \bigcup_{n=1}^{\infty} \bigbrackets{ \brackets{\frac{l}{ln+i}, \frac{1}{ln+i}} \mathdot \gcd{(i,l) = 1}, 1 \leq i \leq l}.
	\]
	This expression uses that if $\gcd{(i,l)} = 1$, then   $\gcd{(l,ln + i)} = 1$ for every integer $n \geq 1$. 	These two expressions are useful in different settings.  The horizontal view will be used to study the box dimensions of the popcorn graph, and the collapsed view will be used to study the Assouad spectrum of the popcorn graph. 
	
	\subsubsection*{Further notation}
	\begin{itemize}
		\item 	Throughout, we write  $a \lesssim b ~ ( \text{or } a \gtrsim b )$ to express $a \leq  cb~ (\text{or } a \geq cb)$ for some universal constant $c$. If $a \lesssim b$ and $a \gtrsim b$, then we write  $a \approx b$. 
		\item 	For  $x > 1$, we write  $\floor{x} = \max \{ n \in \mathbb{N}^+ ~:~ n \leq x \}$ and $\ceil{x} = \min \{ n \in \mathbb{N}^+ ~:~ n \geq x \}$. Observe that if $a > b > 1$ are  two real numbers with $a-b \geq 3$, then 
		\begin{equation}\label{square_esti}
		\floor{a}^2 - \ceil{b}^2 \approx a^2 - b^2. 
		\end{equation}
		\item 
		For any set $X \subset \RR^2$, we write $\proj_x(X)$ to denote the projection of $X$ onto the $x$-axis.
		
	\end{itemize}

	\section{Box dimensions: proof of Theorem \ref{dim_B_result}} \label{proof1}
	
	\subsection{Preparation}

	In this section, we introduce some notation which is specific to the box dimension argument, as well as recall some crucial estimates which we will rely on.   
	
	Let $0 < \delta < 1$ and,  for any integer  $0 \leq k \leq \floor{\delta^{-1/2}}$,  write
	\begin{align}
	S_\delta(k,f) & = \brackets{[0,1] \times [k\delta, (k+1)\delta)} \cap G_f \label{dim_B_S_delta}\\
	S_\delta(k,F) & = \brackets{[0,1] \times [k\delta, (k+1)\delta)} \cap  F.   \label{dim_B_S_delta_full}
	\end{align}
	It is worth noting that $S_\delta(k, f)$ is the $k$th  strip of height $\delta$ in the popcorn graph (and similar for  full popcorn set). We also let
	\begin{equation}\label{L_delta}
	L_\delta(k) = \max \bigbrackets{n \mathdot n \leq \frac{1}{k\delta}} = \floor{\frac{1}{k\delta}}.
	\end{equation}
The  numbers $L_\delta(k) $ effectively index the strips $S_\delta(k, f)$. For  $0 < \delta < 1$, it is clear to see that $L_{\delta}(k) \geq L_{\delta}(k+1)$ for any $k \geq 1$. Moreover,  for any $k \geq 0$, 
	\[
	\ceil{\frac{1}{(k+1)\delta}} = 
	\begin{cases}
	L_\delta(k+1) + 1 & \text{if } \frac{1}{(k+1)\delta} \text{ is not an integer} \\
	L_\delta(k+1) & \text{otherwise.}
	\end{cases}
	\] 
	Thus for each strip $S_\delta(k,f)$ (or $S_\delta(k,F)$), the levels $n \in \NN$ satisfy
	\[
	k\delta \leq \frac{1}{n} < (k+1)\delta\quad \Longleftrightarrow \quad
	L_\delta(k+1) < n \leq L_\delta(k).
	\]
	Let $\psi \mathdot \NN \to \RR$ be a real function. Let $k \geq 1$,   $\delta > 0$ and  for  $L_\delta(k+1) < n \leq L_\delta(k)$ we write 
	\begin{equation}\label{E_n}
	E_n = \bigbrackets{ x \in [0,1] ~:~ \absolutevalues{x - \frac{m}{n}} \leq \frac{\psi(n)}{n} \text{ for some }(m,n), \gcd{(m,n)} = 1, 1 \leq m \leq n-1}.
	\end{equation}
	The set $E_n$   is a finite union of intervals of  length $\frac{2\psi(n)}{n}$.  For any integer $n \geq 2$, we denote the {\it Euler totient function} by
	\[
	\phi(n) = \# \bigbrackets{m ~:~ \gcd{(m,n)} = 1, 1 \leq m \leq n-1}.
	\]
	We require the following bound on the growth of the Euler totient function.
	\begin{theorem}[Estimate of Euler totient function] \cite[Theorem 2.9]{MV2006}\label{totient_funcion}
		
		There exists a constant $0 < c < 1$ and an  integer $N$ such that, for all $n > N$, 
		\[
		\phi(n) \geq c \cdot \frac{n}{\log \log n}.
		\]
	\end{theorem}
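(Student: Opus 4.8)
The plan is to work directly from the Euler product $\phi(n)/n = \prod_{p \mid n}(1 - 1/p)$, where the product ranges over the distinct primes dividing $n$, and to bound it below using two classical and elementary estimates: Mertens' third theorem $\prod_{p \leq x}(1 - 1/p) \approx 1/\log x$ (valid with absolute constants for all $x \geq 2$), and Chebyshev's lower bound $\theta(x) \gtrsim x$ for $\theta(x) = \sum_{p \leq x} \log p$. Write $p_1 < p_2 < \cdots$ for the sequence of primes, suppose $n \geq 2$ has exactly $k$ distinct prime factors $q_1 < q_2 < \cdots < q_k$, and observe that $q_i \geq p_i$ for each $i$. Since $x \mapsto 1 - 1/x$ is increasing,
\[
\frac{\phi(n)}{n} \;=\; \prod_{i=1}^{k}\left(1 - \frac{1}{q_i}\right) \;\geq\; \prod_{i=1}^{k}\left(1 - \frac{1}{p_i}\right) \;=\; \prod_{p \leq p_k}\left(1 - \frac{1}{p}\right) \;\gtrsim\; \frac{1}{\log p_k},
\]
the final step being Mertens' estimate applied at $x = p_k$. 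Everything therefore reduces to bounding $p_k$ from above in terms of $n$.

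For this I would use the primorial bound: the distinct primes $q_1, \dots, q_k$ dividing $n$ contribute pairwise coprime factors, so $q_1 q_2 \cdots q_k$ divides $n$, and hence
\[
\log n \;\geq\; \sum_{i=1}^{k} \log q_i \;\geq\; \sum_{i=1}^{k} \log p_i \;=\; \theta(p_k).
\]
Chebyshev's bound $\theta(x) \gtrsim x$ then gives $p_k \lesssim \log n$, so that $\log p_k \lesssim \log \log n$ for all $n$ large enough that $\log \log n > 0$. Substituting into the previous display yields $\phi(n)/n \gtrsim 1/\log \log n$, which is exactly the claimed inequality: one absorbs all the implied absolute constants into a single $c$, chooses $N$ large enough both to ensure $\log \log n > 0$ and to exclude the finitely many small $n$ on which the asymptotic estimates are being invoked, and finally shrinks $c$ below $1$ if it is not already.

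I do not expect a genuine obstacle, since this is a textbook fact; indeed \cite[Theorem 2.9]{MV2006} gives the sharp form $\liminf_{n \to \infty} \phi(n)(\log \log n)/n = e^{-\gamma}$. The points needing the most care are the two reductions above: first, the monotonicity step, which uses that the prime divisors of $n$ form a set of $k$ primes the $i$th of which is at least $p_i$, legitimising the replacement of the $q_i$ by the first $k$ primes; and second, checking that the chain of $\lesssim$ and $\gtrsim$ relations is uniform in $n$, which holds because the Chebyshev and Mertens estimates are valid with absolute constants for all arguments $\geq 2$. It is worth recording that the resulting bound is sharp up to the value of $c$: evaluating at the primorial $n = p_1 \cdots p_k$ gives $\log n = \theta(p_k) \approx p_k$, hence $\log \log n \approx \log p_k$, while $\phi(n)/n \approx 1/\log p_k$; so one cannot replace $\log \log n$ by $(\log \log n)^{1 - \eps}$ for any $\eps > 0$.
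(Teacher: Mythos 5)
The paper offers no proof of this statement: it is imported wholesale as \cite[Theorem 2.9]{MV2006}, where it appears in the sharp form $\liminf_{n\to\infty}\phi(n)\log\log n/n=e^{-\gamma}$, so there is no "paper's argument" to compare against. Your blind proof is the standard self-contained derivation of the one-sided bound and it is correct. The key reduction --- that the $k$ distinct prime divisors $q_1<\dots<q_k$ of $n$ satisfy $q_i\geq p_i$, so that $\phi(n)/n\geq\prod_{p\leq p_k}(1-1/p)\gtrsim 1/\log p_k$ by Mertens --- is sound, as is the primorial bound $\theta(p_k)\leq\log n$ combined with Chebyshev to get $p_k\lesssim\log n$ and hence $\log p_k\lesssim\log\log n$. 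The edge cases cause no trouble: if $n$ has few or only small prime factors then $p_k$ is small and $1/\log p_k$ is bounded below by an absolute constant, which is stronger than what is claimed (and $n=1$, together with the finitely many $n$ with $\log\log n\leq 1$, is excluded by the hypothesis $n>N$). Your closing observation that the bound is attained up to constants at primorials correctly explains why the theorem is stated with $\log\log n$ rather than anything smaller. What your elementary route buys is independence from the precise constant $e^{-\gamma}$: the paper only ever uses the crude inequality $\phi(n)\gtrsim n/\log\log n$ (indeed it immediately further weakens $\log\log n$ to $n^{\eps}$), so nothing sharper is required.
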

	
	We will frequently use the following crude  estimate on  $\log \log n$ for large $n$.  Specifically, for all  $\eps > 0$, there exists an integer $N(\eps) > 0$ such that, for all $n > N$, 
	$
	\log \log n < n^\eps.
	$
	
	We use the Chung-Erd\H{o}s Inequality from probability theory to provide the lower bound for the size of covers by intervals. 
	
	\begin{theorem}[Chung-Erd\H{o}s Inequality]\cite{CE1952, P1995}\label{C-E_ine}
		Let $\bigbrackets{X,\mu,\mathcal{X}}$ be a probability space, $A_1, \dots , A_m$ be positive events in $\bigbrackets{X,\mu,\mathcal{X}}$, then
		\[
		\mu(A_1 \cup \dots \cup A_m ) \geq \frac{\brackets{\sum\limits_{i=1}^m \mu(A_i)}^2}{\sum\limits_{i=1}^m \sum\limits_{j=1}^m \mu(A_i \cap A_j)}.
		\]
	\end{theorem}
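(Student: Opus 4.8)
The plan is to deduce the inequality from a single application of the Cauchy--Schwarz inequality, applied to the counting random variable
\[
N = \sum_{i=1}^{m} \mathbf{1}_{A_i},
\]
which records how many of the events $A_1,\dots,A_m$ occur at a given point of $X$. Writing $S = A_1 \cup \dots \cup A_m$, the one elementary observation needed is that $N$ vanishes off $S$: if $x \notin S$ then $N(x)=0$, while if $x\in S$ then $\mathbf{1}_S(x)=1$, so that $N = N\cdot \mathbf{1}_S$ holds pointwise on $X$.

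First I would record the two moment identities. By linearity of the integral, $\int_X N \, d\mu = \sum_{i=1}^m \mu(A_i)$. Expanding the square and using $\mathbf{1}_{A_i}\mathbf{1}_{A_j} = \mathbf{1}_{A_i \cap A_j}$ gives $N^2 = \sum_{i=1}^m\sum_{j=1}^m \mathbf{1}_{A_i\cap A_j}$, whence $\int_X N^2 \, d\mu = \sum_{i=1}^m\sum_{j=1}^m \mu(A_i \cap A_j)$. Since $0 \leq N \leq m$ and $\mu$ is a probability measure, both $N$ and $\mathbf{1}_S$ lie in $L^2(\mu)$ and these integrals are finite; moreover the double sum is strictly positive, being at least the diagonal sum $\sum_{i=1}^m \mu(A_i\cap A_i) = \sum_{i=1}^m \mu(A_i) > 0$ (this is the only place the hypothesis that the $A_i$ are positive events is used), so the right-hand side of the claimed inequality is well defined.

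Next I would apply Cauchy--Schwarz in $L^2(\mu)$ to the functions $N$ and $\mathbf{1}_S$. Using the pointwise identity $N = N\mathbf{1}_S$,
\[
\brackets{\sum_{i=1}^m \mu(A_i)}^2 = \brackets{\int_X N\cdot \mathbf{1}_S \, d\mu}^2 \leq \brackets{\int_X N^2 \, d\mu}\brackets{\int_X \mathbf{1}_S^2 \, d\mu} = \brackets{\sum_{i=1}^m\sum_{j=1}^m \mu(A_i\cap A_j)}\,\mu(S).
\]
Dividing through by the positive double sum yields $\mu(S) \geq \brackets{\sum_{i=1}^m \mu(A_i)}^2 \big/ \brackets{\sum_{i=1}^m\sum_{j=1}^m \mu(A_i\cap A_j)}$, which is exactly the assertion.

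There is no genuine obstacle here: the entire content is the inequality $\int_X N \mathbf{1}_S \, d\mu \le \|N\|_{L^2(\mu)}\,\|\mathbf{1}_S\|_{L^2(\mu)}$ combined with the observation $N = N\mathbf{1}_S$. The only points deserving any care are the bookkeeping ones flagged above --- finiteness of the relevant integrals, which is immediate from $0 \le N \le m$, and non-vanishing of the denominator, which is where positivity of the events enters --- ensuring that the statement is meaningful and that the final division is legitimate.
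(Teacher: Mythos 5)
Your proof is correct: the identity $N = N\mathbf{1}_S$ for $N = \sum_{i=1}^m \mathbf{1}_{A_i}$ and $S = A_1 \cup \dots \cup A_m$, the two moment computations, and a single application of Cauchy--Schwarz in $L^2(\mu)$ give exactly the stated inequality, and you correctly isolate where positivity of the events is needed (non-vanishing of the denominator). For comparison, the paper offers no proof of this statement at all --- it is quoted as a known result with citations to Chung--Erd\H{o}s and Petrov --- so there is no ``paper approach'' to diverge from; your argument is the standard second-moment (Cauchy--Schwarz) proof, essentially the one found in those references, and it is a perfectly good self-contained justification of the theorem as used in the paper.
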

	
	In what follows, we let $X = [0,1]$, $\mu$ be the Lebesgue measure on $[0,1]$ and $\mathcal{X}$ be the class of Lebesgue measurable sets in $[0,1]$. 
	
	\begin{theorem}[Duffin-Schaeffer estimate]\cite[Lemma 2]{DS1941}\label{D-S_esti}
		Let $m,n$ be two positive integers satisfying $m, n \geq 2$ and $m \neq n$.  Then  
		\[
		\mu(E_n\cap E_m) \leq 4 \psi(n) \psi(m)
		\]
		where $E_n$ is as  in (\ref{E_n}).
	\end{theorem}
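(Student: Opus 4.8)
The plan is to work directly with the interval structure of $E_n$ and $E_m$ (we may assume $\psi \geq 0$, since otherwise the sets $E_n$ are empty) and to reduce the claim to an arithmetic counting problem in which the coprimality of numerators to denominators carries the essential weight. Write $I_a = \medbrackets{\frac{a}{n} - \frac{\psi(n)}{n},\, \frac{a}{n} + \frac{\psi(n)}{n}}$ for reduced residues $1 \leq a \leq n-1$ with $\gcd(a,n) = 1$, so $E_n = \bigcup_a I_a$, and similarly $E_m = \bigcup_b J_b$ with $J_b$ centred at $\frac{b}{m}$ of radius $\frac{\psi(m)}{m}$. First I would dispose of the easy regime: one always has $\mu(E_n) \leq \phi(n)\cdot\frac{2\psi(n)}{n} \leq 2\psi(n)$ and $\mu(E_m) \leq 2\psi(m)$, so if $\max\{\psi(n),\psi(m)\} \geq 1/2$ then $\mu(E_n \cap E_m) \leq \min\{\mu(E_n),\mu(E_m)\} \leq 2\min\{\psi(n),\psi(m)\} \leq 4\psi(n)\psi(m)$. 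Hence assume $\psi(n), \psi(m) < 1/2$. In this range distinct centres in each family are at distance at least $1/n$ (respectively $1/m$) while each interval has length strictly less than $1/n$ (respectively $1/m$), so the intervals within each family are pairwise disjoint, and therefore
\[
\mu(E_n \cap E_m) = \sum_{a,b} \length{I_a \cap J_b},
\]
the sum running over all reduced pairs $(a,b)$ with $I_a \cap J_b \neq \emptyset$.

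Next I would bring in the arithmetic. For such a pair $\frac{a}{n} \neq \frac{b}{m}$ (equality would force $m = n$, both fractions being in lowest terms), so $am - bn$ is a nonzero integer and $g := \gcd(m,n)$ divides it. Since the overlap of two intervals is at most the sum of their radii minus the distance between their centres, $\length{I_a \cap J_b} \leq \frac{1}{mn}(T - \length{am - bn})$ whenever this is positive, where $T := m\psi(n) + n\psi(m)$. Writing $N(j)$ for the number of reduced pairs $(a,b)$ with $\length{am - bn} = j$, this gives
\[
\mu(E_n \cap E_m) \leq \frac{1}{mn}\sum_{j \geq 1} N(j)\,\max\{0,\, T - j\}.
\]
The heart of the matter is a sharp bound on $N(j)$. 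For a fixed value $r = \pm j$, the equation $am - bn = r$ is solvable only if $g \mid r$, and then its solutions form a single arithmetic progression $(a,b) = (a_0 + tn',\, b_0 + tm')$, $t \in \mathbb{Z}$, where $n' = n/g$ and $m' = m/g$ (note $\gcd(n',m') = 1$); the constraint $1 \leq a \leq n - 1$ confines $t$ to at most $g$ values, and the coprimality conditions $\gcd(a,n) = \gcd(b,m) = 1$ force $\gcd(r/g,\, n'm') = 1$ and eliminate still more values of $t$. Substituting the resulting bound for $N(j)$ into the display above and summing the essentially arithmetic series (using $\sum_{j \geq 1}\max\{0, T - j\} \approx T^2/2$) leaves a bounded multiple of $\psi(n)\psi(m)$; in the complementary case, where the resulting quantity is more naturally controlled through $\psi(n)$ or $\psi(m)$ alone — which happens when one of $m\psi(n)$, $n\psi(m)$ dominates the other — one falls back on $\mu(E_n \cap E_m) \leq \mu(E_n)$ or $\leq \mu(E_m)$ with the sharp prefactors $\phi(n)/n$, $\phi(m)/m$.

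The step I expect to be genuinely delicate is obtaining $N(j)$ with the correct constant. A crude count — using only that the two fractions are distinct and ignoring that the numerators are coprime to the denominators — gives merely $N(j) \lesssim g\cdot\mathbf{1}[\,g \mid j\,]$, which produces a bound of the form $C\psi(n)\psi(m)$ with $C$ appreciably larger than $4$, accompanied by spurious additive terms coming from the unavoidable ``off by one'' in counting integers in a short interval. Recovering the sharp constant requires the full bookkeeping of Duffin and Schaeffer \cite{DS1941}: one must track precisely how the two coprimality conditions thin the progression of solutions, and arrange the summation over $j$ so that the $\phi(n)/n$ and $\phi(m)/m$ savings are deployed exactly where the naive estimate is wasteful. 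Everything else in the argument is routine.
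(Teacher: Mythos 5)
The paper offers no proof of this statement: it is quoted verbatim as \cite[Lemma 2]{DS1941}, so there is no in-paper argument to compare yours against, and I can only judge your sketch on its own terms. Your setup is sound --- the reduction to $\psi(n),\psi(m)<1/2$, the disjointness of the intervals within each family in that regime, the identity $\mu(E_n\cap E_m)=\sum_{a,b}|I_a\cap J_b|$, and the overlap bound $|I_a\cap J_b|\le \frac{1}{mn}(T-|am-bn|)^+$ with $T=m\psi(n)+n\psi(m)$ are all correct. But two things stop this from being a proof. First, the decisive step --- the bound on $N(j)$ with the correct constant --- is explicitly outsourced to ``the full bookkeeping of Duffin and Schaeffer,'' i.e.\ to the very lemma being proved; as written this is a plan plus a citation, which is no more than what the paper itself does. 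Second, and more seriously, your fallback for the unbalanced regime does not work. If $\psi(n),\psi(m)<1/2$ and, say, $m\psi(n)\gg n\psi(m)$, your main estimate gives roughly $T^2/(2mn)\approx m\psi(n)^2/(2n)$, which can exceed $\psi(n)\psi(m)$ by an arbitrarily large factor; and the proposed rescue $\mu(E_n\cap E_m)\le\mu(E_m)\le \frac{2\psi(m)\phi(m)}{m}\cdot m/m \le 2\psi(m)$ is $\le 4\psi(n)\psi(m)$ only when $\psi(n)\ge 1/2$, which is exactly the case you have excluded. (Concretely: $n=3$, $m=10^6$, $\psi(n)=10^{-2}$, $\psi(m)=10^{-12}$ defeats both branches.)

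The repair is to use the other elementary overlap bound, $|I_a\cap J_b|\le 2\min\{\psi(n)/n,\psi(m)/m\}$, in tandem with the pair count. Writing $u=m\psi(n)$, $v=n\psi(m)$, the number of intersecting pairs is at most $2nm\bigl(\psi(n)/n+\psi(m)/m\bigr)=2(u+v)$ by \cite[Lemma 1]{DS1941} (the same counting lemma the paper invokes in Proposition 6.1, and which is genuinely elementary: solutions of $am-bn=r$ form a single arithmetic progression mod $(n/g,m/g)$ with $g=\gcd(m,n)\mid r$), while each overlap is at most $2\min\{u,v\}/(nm)$; since $(u+v)\min\{u,v\}\le 2uv$, this yields $\mu(E_n\cap E_m)\le 8\psi(n)\psi(m)$ uniformly, with no case split. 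Sharpening $8$ to $4$ does require the finer Duffin--Schaeffer bookkeeping you allude to, but note that every application of Theorem \ref{D-S_esti} in this paper is through $\lesssim$, so the constant is immaterial here; a self-contained proof of the $8\psi(n)\psi(m)$ bound along the lines above would fully serve the paper's purposes.
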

	
	For the rest of the paper, for fixed $\delta$, we let $\psi(n) = n \delta$ in  the definition of $E_n$ (see (\ref{E_n})) for all  integers $n \geq 2$.
	
	\begin{lemma}[Counting integers in horizontal strips]\label{integer_in_strip_k} Fix sufficiently small $0 < \delta < 1$ and sufficiently small $\eps > 0.$ Then for all $1 \leq k \leq \floor{\delta^{-1/2 + \eps}}$, we have
		\[
		\frac{1}{2k^2\delta} \leq L_{\delta}(k) - L_{\delta}(k+1) \leq \frac{1}{k^2\delta}
		\]
		where $L_{\delta}(k)$ is as in (\ref{L_delta}).
	\end{lemma}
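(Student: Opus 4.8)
The plan is to evaluate $L_{\delta}(k) - L_{\delta}(k+1)$ by reading it as a counting quantity and comparing it with the length of a short interval. From the equivalence recorded above, $L_{\delta}(k+1) < n \leq L_{\delta}(k)$ holds precisely when $k\delta \leq \tfrac1n < (k+1)\delta$, i.e. when $\tfrac{1}{(k+1)\delta} < n \leq \tfrac{1}{k\delta}$. Hence $L_{\delta}(k) - L_{\delta}(k+1) = \floor{\frac{1}{k\delta}} - \floor{\frac{1}{(k+1)\delta}}$ is exactly the number of integers in the half-open interval $I_k := \left( \frac{1}{(k+1)\delta},\, \frac{1}{k\delta} \right]$, whose length is $\length{I_k} = \frac{1}{k(k+1)\delta}$. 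Since the number of integers in a half-open interval differs from its length by less than $1$, the first step is simply to record
\[
\frac{1}{k(k+1)\delta} - 1 \;<\; L_{\delta}(k) - L_{\delta}(k+1) \;<\; \frac{1}{k(k+1)\delta} + 1 .
\]

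Next I would compare the main term $\frac{1}{k(k+1)\delta}$ with $\frac{1}{k^2\delta}$. Because $k \leq k+1 \leq 2k$ for every $k \geq 1$, we have
\[
\frac{1}{2k^2\delta} \;\leq\; \frac{1}{k(k+1)\delta} \;\leq\; \frac{1}{k^2\delta},
\]
so the two claimed bounds will follow once the additive $\pm 1$ from the floors is absorbed. This is precisely where the hypothesis $k \leq \floor{\delta^{-1/2+\eps}}$ enters: it forces $k^2\delta \leq \delta^{2\eps}$, hence $\frac{1}{k^2\delta} \geq \delta^{-2\eps} \to \infty$ as $\delta \to 0$. Writing $M := \frac{1}{k^2\delta}$ we have $\frac{1}{k(k+1)\delta} = M\brackets{1 - \frac{1}{k+1}}$, so the true count equals $M$ up to a multiplicative factor $1+o(1)$, uniform over $k$ in the admissible range, plus an $O(1)$ error that is negligible next to $M \geq \delta^{-2\eps}$; for all sufficiently small $\delta$ (depending on $\eps$) this yields the asserted inequalities.

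The step I expect to be the main obstacle is this final one: controlling the $O(1)$ floor discrepancy precisely enough to stay inside the constants $\tfrac12$ and $1$. When $k$ is near the top of the range, $\frac{1}{k^2\delta}$ is only of size $\delta^{-2\eps}$, and the slack $\frac{1}{k^2\delta} - \frac{1}{k(k+1)\delta} = \frac{1}{k^2(k+1)\delta} = \frac{M}{k+1}$, while large, is comparable to $M$ only up to the factor $1/(k+1)$; so one must check carefully that after rounding the integer count still obeys both inequalities — this is exactly why the exponent is taken to be $-1/2+\eps$ (rather than $-1/2$) and why $\eps$ must be small, so that $M$ genuinely dominates the error uniformly. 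For small $k$, by contrast, $\frac{1}{k\delta}$ and $\frac{1}{(k+1)\delta}$ are both enormous, so the floor operations perturb $L_{\delta}(k)$ and $L_{\delta}(k+1)$ by a negligible relative amount and the bounds are immediate.
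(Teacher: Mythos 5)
Your approach is the same as the paper's: write $L_\delta(k)-L_\delta(k+1)=\floor{\tfrac{1}{k\delta}}-\floor{\tfrac{1}{(k+1)\delta}}$, compare this count with the interval length $\tfrac{1}{k(k+1)\delta}$, and absorb the $\pm1$ rounding error using the restriction $k\le\floor{\delta^{-1/2+\eps}}$. You have also put your finger on exactly the delicate point (the top of the $k$-range), but your resolution of it does not work. For the upper bound, the $+1$ must be absorbed not by $M=\tfrac{1}{k^2\delta}$ but by the \emph{slack} $\tfrac{1}{k^2\delta}-\tfrac{1}{k(k+1)\delta}=\tfrac{1}{k^2(k+1)\delta}=\tfrac{M}{k+1}$. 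Since $M$ is only guaranteed to be at least $\delta^{-2\eps}$ while $k$ may be as large as $\delta^{-1/2+\eps}$, this slack can be of order $\delta^{1/2-3\eps}$, which tends to $0$: for $k$ between roughly $\delta^{-1/3}$ and $\delta^{-1/2+\eps}$ the $O(1)$ floor discrepancy is \emph{not} negligible next to the slack, and the inequality with constant exactly $1$ cannot be extracted this way. (Indeed, one can choose admissible pairs $(\delta,k)$ with $\delta$ arbitrarily small for which $\floor{\tfrac{1}{k\delta}}-\floor{\tfrac{1}{(k+1)\delta}}$ exceeds $\tfrac{1}{k^2\delta}$, by arranging $\tfrac{1}{k\delta}$ to lie just above an integer and $\tfrac{1}{(k+1)\delta}$ just below one.) A milder version of the same issue occurs for the lower bound at $k=1$, where $\tfrac{1}{k(k+1)\delta}$ equals $\tfrac{1}{2k^2\delta}$ exactly and there is no slack to absorb the $-1$.

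In fairness, the paper's own proof is the identical computation and is equally cavalier at this step, asserting $\tfrac{1}{k\delta}-\tfrac{1}{(k+1)\delta}+1\le\tfrac{1}{k^2\delta}$ ``for sufficiently small $\delta$'' without noting that the required inequality $\tfrac{1}{k^2(k+1)\delta}\ge 1$ fails once $k\gg\delta^{-1/3}$. What is actually true, and is all that the rest of the paper uses (the remark following the lemma and Corollary \ref{cor_integer_in_strip_k}), is the two-sided comparability $L_\delta(k)-L_\delta(k+1)\approx\tfrac{1}{k^2\delta}$ with absolute implied constants; this \emph{does} follow from your step 2, because the interval length satisfies $\tfrac{1}{k(k+1)\delta}\ge\tfrac{1}{2k^2\delta}\ge\tfrac12\delta^{-2\eps}\to\infty$ uniformly over admissible $k$, so the $\pm1$ perturbs the count only by a factor $1+o(1)$. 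If you want the literal constants $\tfrac12$ and $1$, you must either restrict to $k\lesssim\delta^{-1/3}$ or relax the constants; as stated, neither your argument nor the paper's justifies them across the whole range.
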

	\begin{proof}
		For the upper  bound,
		\[
		L_{\delta}(k) - L_{\delta}(k+1)=  \floor{\frac{1}{k\delta}} - \floor{\frac{1}{(k+1)\delta}} \leq \frac{1}{k\delta} - \frac{1}{(k+1)\delta} + 1 \leq \frac{1}{k^2\delta}
		\]
		for sufficiently small $\delta>0$.  For the lower bound,
		\[
		L_{\delta}(k) - L_{\delta}(k+1) =  \floor{\frac{1}{k\delta}} - \floor{\frac{1}{(k+1)\delta}} \geq \frac{1}{k\delta} - \frac{1}{(k+1)\delta} - 1 \geq \frac{1}{(k+1)^2\delta} - 1 \geq \frac{1}{2k^2\delta},
		\]
		for sufficiently small $\delta>0$, as required. 
	\end{proof}
	
	\remark{The inclusion  of $\eps$ in the range of allowable $k$ in Lemma \ref{integer_in_strip_k} is to guarantee that $\frac{1}{(k+1)^2\delta} - 2 > 1$. We also have $L_{\delta}(k) - L_{\delta}(k+1) \approx \frac{1}{k^2\delta}$.}

	\begin{corollary}\label{cor_integer_in_strip_k}
		Fix sufficiently small $0 < \delta < 1$ and sufficiently small $\eps > 0$. For $1 \leq k \leq \floor{\delta^{-1/2 + \eps}}$, we have
		\[
		\sum\limits_{i = L_{\delta}(k+1) + 1}^{L_\delta(k)}  i \approx \frac{1}{k^3\delta^2}.
		\]
	\end{corollary}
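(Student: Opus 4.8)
The plan is to reduce the sum to a product of two quantities, each of which we already control, via the standard arithmetic series identity. Writing $a = L_\delta(k+1)$ and $b = L_\delta(k)$, we have
\[
\sum_{i=a+1}^{b} i \;=\; \frac{b(b+1)}{2} - \frac{a(a+1)}{2} \;=\; \frac{(b-a)(a+b+1)}{2},
\]
so the task becomes estimating the two factors $b-a = L_\delta(k) - L_\delta(k+1)$ and $a+b+1 = L_\delta(k) + L_\delta(k+1) + 1$.

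For the first factor, Lemma \ref{integer_in_strip_k} (valid precisely on the range $1 \le k \le \floor{\delta^{-1/2+\eps}}$) gives $L_\delta(k) - L_\delta(k+1) \approx \frac{1}{k^2\delta}$ immediately. For the second factor, I would first note the trivial bound $L_\delta(k+1) \le L_\delta(k) = \floor{\frac{1}{k\delta}} \le \frac{1}{k\delta}$, so that $a+b+1 \le \frac{2}{k\delta} + 1 \lesssim \frac{1}{k\delta}$. For the matching lower bound, the constraint $k \le \floor{\delta^{-1/2+\eps}}$ is used: it forces $\frac{1}{k\delta} \ge \delta^{-1/2-\eps}$, which is large for small $\delta$, hence $L_\delta(k) = \floor{\frac{1}{k\delta}} \ge \frac{1}{k\delta} - 1 \ge \frac{1}{2k\delta}$, and therefore $a+b+1 \ge L_\delta(k) \gtrsim \frac{1}{k\delta}$. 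Thus $a+b+1 \approx \frac{1}{k\delta}$.

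Multiplying the two estimates yields
\[
\sum_{i=L_\delta(k+1)+1}^{L_\delta(k)} i \;=\; \frac{(b-a)(a+b+1)}{2} \;\approx\; \frac{1}{k^2\delta}\cdot\frac{1}{k\delta} \;=\; \frac{1}{k^3\delta^2},
\]
as claimed. There is no genuine obstacle here; the only point requiring a little care is ensuring the estimate $L_\delta(k) \approx \frac{1}{k\delta}$ holds uniformly across the whole admissible range of $k$, which is exactly where the upper restriction $k \le \floor{\delta^{-1/2+\eps}}$ is invoked (it guarantees $1/(k\delta) \to \infty$ so that the ``$-1$'' from the floor function is negligible). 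All implied constants are absolute, so the conclusion holds with constants independent of $k$ and $\delta$.
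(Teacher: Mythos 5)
Your proposal is correct and is essentially the argument the paper intends: the corollary is left as an immediate consequence of Lemma \ref{integer_in_strip_k} together with the observation that every term of the sum is comparable to $\frac{1}{k\delta}$ on the stated range of $k$, which is exactly what your factorisation $\frac{(b-a)(a+b+1)}{2}$ makes explicit. The one point needing care --- that $1/(k\delta)\geq \delta^{-1/2-\eps}$ is large so the floor-function losses are negligible --- is handled correctly.
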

	
	The following  lemma provides estimates of the number of cubes required to cover 
	$S_\delta(k,f)$ and $S_\delta(k,F)$. Recall the definitions  (\ref{dim_B_S_delta}) and (\ref{dim_B_S_delta_full}).
	
	\begin{lemma}\label{dim_B_cover_esti}
		Fix sufficiently small $0 < \delta < 1$, sufficiently small $\eps > 0$, and an integer $1 \leq k \leq \floor{\delta^{-1/2 + \eps}}$. For all  $L_\delta(k+1) < m \leq L_\delta(k)$, we have
		\[
		\frac{1}{4\delta} \cdot \mu\brackets{\bigcup\limits_{m = L_{\delta}(k+1) + 1}^{L_{\delta}(k)} E_m} \leq N_\delta \brackets{S_\delta(k, f)} \leq N_\delta  \brackets{S_\delta(k,F)} \leq \sum\limits_{m = L_{\delta}(k+1) + 1}^{L_{\delta}(k)} m.
		\]
	\end{lemma}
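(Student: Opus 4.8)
The plan is to establish the three inequalities in the displayed chain separately (note that the displayed chain does not actually involve a free choice of $m$, the index $m$ being bound by the sum and the union). The middle inequality $N_\delta\brackets{S_\delta(k,f)} \le N_\delta\brackets{S_\delta(k,F)}$ is immediate: since $G_f \subseteq F$ we have $S_\delta(k,f) \subseteq S_\delta(k,F)$, and $N_\delta$ is monotone under inclusion. For the outer two bounds it is convenient to record first that, because $k \ge 1$, the strip $\brackets{[0,1]\times[k\delta,(k+1)\delta)}$ meets $F$ (and a fortiori $G_f$) only in the finite set of points $(p/n,1/n)$ with $L_\delta(k+1) < n \le L_\delta(k)$ and $1 \le p \le n-1$ (additionally requiring $\gcd(p,n)=1$ in the case of $G_f$); the piece $[0,1]\times\{0\}$ of $F$ plays no role since $0\notin[k\delta,(k+1)\delta)$. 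In particular $\proj_x\brackets{S_\delta(k,f)}$ is exactly the set of reduced fractions $p/n$ with $L_\delta(k+1) < n \le L_\delta(k)$, which coincides with the set of centres of the intervals making up $E_n$ for $L_\delta(k+1)<n\le L_\delta(k)$, each such interval having radius $\psi(n)/n = \delta$ under the standing convention $\psi(n)=n\delta$.

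For the upper bound I would cover $S_\delta(k,F)$ one point at a time: each of the finitely many points listed above is covered by a single closed $\delta$-cube, so $N_\delta\brackets{S_\delta(k,F)}$ is at most the number of such points, namely $\sum_{n=L_\delta(k+1)+1}^{L_\delta(k)}(n-1) \le \sum_{m=L_\delta(k+1)+1}^{L_\delta(k)} m$, which is the desired bound.

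For the lower bound, let $N = N_\delta\brackets{S_\delta(k,f)}$ and fix closed $\delta$-cubes $Q_1,\dots,Q_N$ covering $S_\delta(k,f)$. Projecting onto the $x$-axis, the intervals $\proj_x(Q_1),\dots,\proj_x(Q_N)$, each of length $\delta$, cover $\proj_x\brackets{S_\delta(k,f)}$. Next I would observe that $\bigcup_{m=L_\delta(k+1)+1}^{L_\delta(k)} E_m$ lies in the $\delta$-neighbourhood of $\proj_x\brackets{S_\delta(k,f)}$: any $x \in E_m$ satisfies $\absolutevalues{x - p/m} \le \delta$ for some reduced fraction $p/m \in \proj_x\brackets{S_\delta(k,f)}$. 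Hence this union is covered by the enlarged intervals $\widetilde Q_1,\dots,\widetilde Q_N$ obtained by extending each $\proj_x(Q_i)$ by $\delta$ on each side, and each $\widetilde Q_i$ has length $3\delta$. Therefore $\mu\brackets{\bigcup_m E_m} \le \sum_{i=1}^N \absolutevalues{\widetilde Q_i} = 3\delta N \le 4\delta N$, and rearranging yields $\tfrac{1}{4\delta}\,\mu\brackets{\bigcup_m E_m} \le N_\delta\brackets{S_\delta(k,f)}$.

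None of these steps is genuinely hard; the lemma is essentially a bookkeeping device that translates a covering problem for the graph into a one-dimensional measure estimate for the sets $E_m$. The only points needing a little care are that a covering $\delta$-cube projects onto an interval of length exactly $\delta$ (so the slack absorbed into the constant $4$ comes solely from the radius-$\delta$ enlargement), and that the restriction $k \ge 1$ is what makes both $S_\delta(k,f)$ and $S_\delta(k,F)$ finite point sets. The substantive work — bounding $\mu\brackets{\bigcup_m E_m}$ from below via the Chung-Erd\H{o}s inequality together with the Duffin-Schaeffer estimate — is deferred to the part of the argument that follows.
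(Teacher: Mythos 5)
Your proof is correct and matches the paper's in substance: the middle inequality and the cardinality upper bound are handled exactly as in the paper, and both lower-bound arguments rest on the same observation that $\bigcup_m E_m$ (with $\psi(m)=m\delta$) is precisely the $\delta$-neighbourhood of $\proj_x(S_\delta(k,f))$. The only difference is the direction in which the lower bound is run. The paper decomposes $\bigcup_m E_m$ into its connected components $I_j$ and argues that, since consecutive points of $\proj_x(S_\delta(k,f))$ inside $I_j$ are at distance at most $2\delta$, each component forces at least $\mu(I_j)/(4\delta)$ cubes; you instead start from an arbitrary cover by $N$ cubes, fatten each projected interval by $\delta$ to length $3\delta$, and note that the fattened intervals swallow $\bigcup_m E_m$, giving $\mu\bigl(\bigcup_m E_m\bigr)\le 3N\delta\le 4N\delta$. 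These are the same bookkeeping seen from opposite ends; if anything, your version is marginally tidier on the constant, since the paper's per-component count strictly requires a small separate check when $\mu(I_j)$ is comparable to $\delta$ (there one falls back on $N\ge 1\ge\mu(I_j)/(4\delta)$), whereas your global enlargement argument needs no case distinction.
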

	
	\begin{proof}
		The upper bound simply follows by estimating the cardinality of $S_{\delta}(k,F)$. For the lower bound, observe that  $\bigcup_{m = L_{\delta}(k+1) + 1}^{L_{\delta}(k)} E_m$ is a finite union of disjoint intervals, say $I_1, \dots, I_M$. For each interval $I_j$,  the distance between consecutive points in 
		$I_j \cap \proj_x(S_\delta(k,f))$ is no more than $2\delta$. Thus for every interval $I_j$, we need at least $\frac{\mu{(I_j)}}{4\delta}$ cubes to cover. Thus the lower bound holds.
	\end{proof}

	\subsection{Proof of Theorem \ref{dim_B_result}}

	\begin{proof}[Proof of Upper Bound]
		It suffices to bound the upper box dimension of the full popcorn set $F$ from above.  Fix $\delta \in (0,1)$.  We decompose $F$ into two regions which we treat separately. It follows by a simple cardinality estimate  that
		\[
		N_{\delta} \left( F  \cap \brackets{[0,1] \times [\delta^{2/3}, 1]}\right)  \leq \# F  \cap \brackets{[0,1] \times [\delta^{2/3}, 1]} \lesssim \left(\delta^{-2/3}\right)^2= \delta^{-4/3}.
		\]
	For the remaining part of $F$,
		\[
		N_{\delta} \left( F  \cap \brackets{[0,1] \times [0, \delta^{2/3})}\right) \leq N_{\delta}  \brackets{[0,1] \times [0, \delta^{2/3})}  \lesssim \delta^{-1} \cdot \delta^{-1/3} = \delta^{-4/3},
		\]
		as required.
	\end{proof}
	
	\begin{proof}[Proof of Lower Bound]
		Fix $\eps> 0$.  We write $\delta_n = \brackets{\frac{1}{n(n+1)}}^6$ for  $n \geq 1$ throughout this part of the proof. It suffices to prove that  for large enough  $n$,
		\[
		N_{\delta_n} \left(G_f \cap \brackets{[0,1] \times [\delta_n^{2/3}, \delta_n^{1/2}]}\right) \gtrsim \delta_n^{-(4/3) + \eps}.
		\]
There is nothing particularly special about this choice of $\delta_n$, but it is convenient for the  reciprocals of the square and cube roots of $\delta_n$ to be integers, for example.  To prove this, we consider  $N_{\delta_n}(S_{\delta_n}(k,f))$ for $\delta_n^{-1/3} \leq k \leq \delta_n^{-1/2 +\eps}$. Fix integers $n$ and $k$ in this range. It follows from   Lemma \ref{dim_B_cover_esti} and the  \CE inequality (Theorem \ref{C-E_ine}) that
		\begin{equation}\label{dim_B_lower_bound_1}
		N_{\delta_n}(S_{\delta_n}(k,f)) 
		\gtrsim \delta_n^{-1} \cdot \mu\brackets{\sum\limits_{i = L_{\delta_n}(k+1) + 1}^{L_{\delta_n}(k)} E_i}
		\gtrsim \delta_n^{-1} \cdot \frac{\brackets{\sum\limits_{i = L_{\delta_n}(k+1) + 1}^{L_{\delta_n}(k)} \mu(E_i)}^2}{ \mathop{\sum\sum}\limits_{i,j = L_{\delta_n}(k+1) + 1}^{L_{\delta_n}(k)} \mu(E_i \cap E_j)}
		\end{equation}
		where $E_i$ is as in (\ref{E_n}) with $\psi(i) = i \cdot \delta_n$ for all $i \geq 2$.  First, it follows from Theorem \ref{totient_funcion} that 
		\begin{align*}
		\sum\limits_{i = L_{\delta_n}(k+1) + 1}^{L_{\delta_n}(k)} \mu(E_i) &\gtrsim \delta_n \cdot \brackets{\sum\limits_{i = L_{\delta_n}(k+1) + 1}^{L_{\delta_n}(k)} i \cdot \brackets{\log \log i}^{-1}} \\
&\gtrsim \delta_n \cdot \brackets{\log \log \frac{1}{k\delta_n}}^{-1} \cdot \brackets{\sum\limits_{i = L_{\delta_n}(k+1) + 1}^{L_{\delta_n}(k)} i}.
		\end{align*}
		Thus, applying Corollary \ref{cor_integer_in_strip_k},
		\begin{equation}\label{dim_B_lower_bound_2}
		\begin{aligned}
		\brackets{\sum\limits_{i = L_{\delta_n}(k+1) + 1}^{L_{\delta_n}(k)} \mu(E_i)}^2 & \geq \brackets{\sum\limits_{i = L_{\delta_n}(k+1) + 1}^{L_{\delta_n}(k)} i}^2  \cdot \brackets{\log \log \frac{1}{k\delta_n}}^{-2} \cdot \delta_n^2 \\
		& \gtrsim \delta_n^{2+2\eps} \cdot \brackets{\frac{1}{k^3\delta_n^2}}^2.
		\end{aligned}
		\end{equation}
		Second, it follows from Theorem \ref{D-S_esti} that
		\[
		\sum\limits_{i,j = L_{\delta_n}(k+1) + 1, i \neq j}^{L_{\delta_n}(k)}  \mu(E_i \cap E_j) \lesssim \delta_n^2  \sum\limits_{i,j = L_{\delta_n}(k+1) + 1, i \neq j}^{L_{\delta_n}(k)} i \cdot j \lesssim \delta_n^2 \brackets{\sum\limits_{i = L_{\delta_n}(k+1) + 1}^{L_{\delta_n}(k)}  i}^2 .
		\]
		Thus, again applying Corollary \ref{cor_integer_in_strip_k},
		\begin{equation}\label{dim_B_lower_bound_3}
		\begin{aligned}
		 \mathop{\sum\sum}\limits_{i,j = L_{\delta_n}(k+1) + 1}^{L_{\delta_n}(k)}  \mu(E_i \cap E_j)
		& \lesssim \sum\limits_{i,j = L_{\delta_n}(k+1) + 1, i \neq j}^{L_{\delta_n}(k)}  \mu(E_i \cap E_j) + \sum\limits_{i = L_{\delta_n}(k+1) + 1}^{L_{\delta_n}(k)} \mu(E_i) \\
		& \lesssim \delta_n^2 \brackets{\sum\limits_{i = L_{\delta_n}(k+1) + 1}^{L_{\delta_n}(k)} i}^2 + \delta_n \brackets{\sum\limits_{i = L_{\delta_n}(k+1) + 1}^{L_{\delta_n}(k)} i} \\
& \lesssim \delta_n^2 \brackets{\frac{1}{k^3\delta_n^2}}^2 + \delta_n \brackets{\frac{1}{k^3\delta_n^2}} .
		\end{aligned}
		\end{equation}
		Combining (\ref{dim_B_lower_bound_1})-(\ref{dim_B_lower_bound_3}) we get
		\begin{align*}
		N_{\delta_n}(S_{\delta_n}(k,f))   \gtrsim \frac{\delta_n^{1 + 2\eps} \cdot \brackets{\frac{1}{k^3\delta_n^2}}^2 }{\delta_n^2 \brackets{\frac{1}{k^3\delta_n^2}}^2 + \delta_n \brackets{\frac{1}{k^3\delta_n^2}}}  = \delta_n^{-1+ 2\eps} \cdot \frac{1}{k^3\delta_n + 1} 
		\gtrsim \delta_n^{-1+ 2\eps} \cdot \frac{1}{k^3\delta_n}.
		\end{align*}
		Thus summing over  $\delta_n^{-1/3} \leq k \leq \delta_n^{-1/2 +\eps}$, it follows from (\ref{square_esti}) that
		\[
		N_{\delta_n}(G_f \cap \brackets{[0,1] \times [\delta_n^{2/3}, \delta_n^{1/2}]}) \gtrsim \delta_n^{-1+ 2\eps} \cdot  \sum\limits_{k=\delta_n^{-1/3}}^{\delta_n^{-1/2 + \eps}} \frac{1}{k^3\delta_n} \gtrsim \delta_n^{- \frac{4}{3}+2\eps}.
		\]
		This proves a lower bound of $4/3-2\eps$ and the result follows by letting $\eps$ tend  to 0.
	\end{proof}
	
	\section{Assouad spectrum: proof of Theorem \ref{A_spe_result}} \label{proof2}

	\subsection{Preparation}

	In this section we introduce some notation which is specific to the Assouad spectrum argument, as well as recall some crucial estimates which we will rely on.   
	
	For integers $n, l \geq 1$, and  real numbers $0 < \delta < 1$, we introduce the following notation.  We write
	\begin{equation}\label{A_spe_S_l,n}
	S(l,n)  = \bigbrackets{ \brackets{\frac{l}{ln+i}, \frac{1}{ln+i}} \mathdot \gcd(i,l) = 1, \, 1 \leq i \leq l-1},
	\end{equation}
	that is,  the points in the popcorn graph which lie  on the line $y = \frac{x}{l}$ with $\frac{1}{n+1} < x < \frac{1}{n}$.  We also write 
	\begin{equation}\label{A_spe_proj_S_l,n}
	\proj_x (S(l,n)) = \bigbrackets{ \frac{l}{ln+i} ~:~ \gcd(i,l) = 1, \, 1 \leq i \leq l-1}
	\end{equation}
	for the projection of $S(l,n)$ onto the $x$-axis, and 
	\begin{equation}\label{A_spe_F_S_l,n}
	F_{S(l,n)}(\delta)  = \bigbrackets{ x \in [0,1] \mathdot \absolutevalues{x- \frac{l}{ln + i}} \leq \delta \text{ for some }i, \gcd{(l,i)}= 1, 1 \leq i \leq l-1} 
	\end{equation}
	to denote the natural cover of $\proj_x \brackets{S(l,n)}$ by  cubes of side length $2\delta$.  Similar to  $L_{\delta}(k)$ (see (\ref{L_delta})),  we write 
	\begin{equation}\label{A_spe_L'_delta,n}
	L'_{\delta, n}(k)  = \max \bigbrackets{ m \mathdot m \leq \frac{1}{k(n+1)\delta}} = \floor{ \frac{1}{k(n+1)\delta} }
	\end{equation}
	to index the lines separating the collapsed strips of level $k$. Also, similar to  $L_{\delta}(k)$,  
	\[
	\ceil{\frac{1}{(k+1)(n+1)\delta}} = 
	\begin{cases}
	L'_{\delta, n}(k + 1) + 1 & \text{if } \frac{1}{(k+1)(n+1)\delta} \text{ is not an integer}\\
	L'_{\delta, n}(k + 1)  & \text{otherwise.}
	\end{cases}
	\]
	For simplicity, for fixed $n$ we write  
	\begin{equation}\label{A_spe_strip_1}
	S_l : = S(l,n); \qquad F_l(\delta) : = F_{S(l,n)}(\delta); \qquad L'_{\delta}(k) : = L'_{\delta, n}(k).
	\end{equation}
	For  $0 < \theta < 1$, we write
	\begin{equation}\label{delta_n_theta}
	\delta_n(\theta) = \brackets{\frac{1}{n(n+1)}}^{\frac{1}{\theta}},
	\end{equation}
	and when studying the covers with side length $\delta_n(\theta)$ for some $0 < \theta < 1$ and sufficiently large integer $n$, we write
	\begin{equation}\label{A_spe_strip_2}
	\begin{aligned}
	S_{\delta_n(\theta),\theta}(k) = \bigcup_{l = L'_{\delta_n(\theta)}(k+1) + 1}^{L'_{\delta_n(\theta)}(k)} S_l, & \quad 
	F_{\delta_n(\theta),\theta}(k) = \bigcup_{l = L'_{\delta_n(\theta)}(k+1) + 1}^{L'_{\delta_n(\theta)}(k)} F_l(\delta_n(\theta)), \\
	\proj_x \brackets{S_{\delta_n(\theta),\theta}(k)} & = \bigcup_{l = L'_{\delta_n(\theta)}(k+1) + 1}^{L'_{\delta_n(\theta)}(k)} \proj_x \brackets{S_l}.
	\end{aligned}
	\end{equation}
	
	\begin{proposition}[Local Duffin-Schaeffer Estimate]\label{local_D-S_esti}
		For sufficiently large  $n$, for all $l ,l' \geq 2$ with $l \neq l'$, we have
		\[
		\mu{(F_l(\delta) \cap F_{l'}(\delta))} \leq 8 l l' \delta^2 (n+1)^2.
		\]
	\end{proposition}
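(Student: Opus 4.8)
The plan is to mimic the proof of the Duffin–Schaeffer estimate (Theorem~\ref{D-S_esti}) but carried out on a single collapsed line of slope $1/l$, tracking how the extra factor $(n+1)$ enters through the geometry of the set $\proj_x(S(l,n))$. Recall that $F_l(\delta)=F_{S(l,n)}(\delta)$ is a union of intervals of length $2\delta$ centred at the points $\frac{l}{ln+i}$ with $\gcd(i,l)=1$, $1\le i\le l-1$, and similarly for $F_{l'}(\delta)$ with points $\frac{l'}{l'n+i'}$. First I would set up a change of variables that linearises these centres: the map $x\mapsto 1/x$ (or equivalently working with the denominators $q=ln+i$) sends the centre $\frac{l}{ln+i}$ to $\frac{ln+i}{l}=n+\frac{i}{l}$, and an interval of radius $\delta$ about $\frac{l}{ln+i}$ is contained in an interval of radius $\approx \delta/x^2 \lesssim \delta (n+1)^2$ about $n+\frac{i}{l}$, since on the relevant range $x\ge \frac{1}{n+1}$. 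Thus, up to a bounded multiplicative constant, $F_l(\delta)\cap F_{l'}(\delta)$ has measure comparable to $(n+1)^{-2}$ times the measure of the intersection, near the integer $n$, of the set of points within $\delta(n+1)^2$ of $\{i/l : \gcd(i,l)=1\}$ with the set of points within $\delta(n+1)^2$ of $\{i'/l' : \gcd(i',l')=1\}$ — but with an extra $(n+1)^2$ needed to undo the Jacobian, one sees the net effect is exactly a multiplication of the Diophantine estimate by $(n+1)^2$.

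The cleaner route, and the one I would actually write, is to reduce \emph{directly} to Theorem~\ref{D-S_esti}. Observe that $x\mapsto (n+1)x$ (say) or a suitable affine rescaling maps the window $\frac{1}{n+1}<x<\frac1n$ to an interval of length $\approx (n+1)^{-2}$, on which the points $\frac{l}{ln+i}$ become, after clearing denominators, of the form $\frac{m}{l}$ for appropriate integers $m$ coprime to $l$ (here one uses the identity quoted in the paper: $\gcd(i,l)=1 \Rightarrow \gcd(l,ln+i)=1$, read in the reverse direction to identify the numerators). Concretely: the rational $\frac{l}{ln+i}$ in lowest terms has denominator $ln+i$; rescaling $x$ by a factor that turns the local picture into the standard one of (\ref{E_n}) produces intervals around $\frac{m}{l}$ of radius $\frac{\psi(l)}{l}$ with $\psi(l)\approx l\delta(n+1)^2$, and similarly $\psi(l')\approx l'\delta(n+1)^2$. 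Applying Theorem~\ref{D-S_esti} gives $\mu(E_l\cap E_{l'})\le 4\psi(l)\psi(l')$ in the rescaled coordinate; undoing the scaling (which multiplies measures by $\approx (n+1)^{2}$, not $(n+1)^{-2}$, because we are pulling back) and being careful with the constants yields $\mu(F_l(\delta)\cap F_{l'}(\delta))\le 8 ll'\delta^2(n+1)^2$, where the $8$ absorbs the $4$ together with the comparability constants from replacing $\approx$ by $\le$ in the Jacobian bounds on the range $\frac{1}{n+1}<x<\frac1n$.

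The main obstacle I anticipate is bookkeeping rather than conceptual: making the affine reduction to the exact hypotheses of Theorem~\ref{D-S_esti} precise, in particular (i) checking that the denominators $ln+i$ appearing across the relevant range of $l$ are genuinely of the shape required so that the Diophantine lemma applies verbatim after rescaling, and (ii) controlling the distortion of the nonlinear map $x\mapsto 1/x$ (or the centres $\frac{l}{ln+i}$) uniformly over the window $\frac{1}{n+1}<x<\frac1n$ — it is here that the hypothesis "$n$ sufficiently large" is used, to ensure the window is short enough that the Jacobian is within a constant factor of $(n+1)^{2}$ throughout, and to ensure the rescaled radii $\psi(l)$ are still $<1$ so that $E_l$ is a disjoint union of intervals of the stated length. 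Once these two points are pinned down, the inequality follows immediately by substitution, with room to spare in the constant $8$.
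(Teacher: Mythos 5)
Your underlying idea --- inflate the Diophantine tolerance by the factor $(n+1)^2$ coming from the geometry of the window $\left(\frac{1}{n+1},\frac{1}{n}\right)$ and then invoke a Duffin--Schaeffer count of close pairs of reduced fractions $\frac{i}{l}$, $\frac{i'}{l'}$ --- is the right one, and your first paragraph gets the accounting right: the measure in the original coordinate is $\approx (n+1)^{-2}$ times the measure in the rescaled coordinate, each radius is inflated by $(n+1)^2$, and the net effect is one factor of $(n+1)^2$ on top of $4ll'\delta^2$. But the route you say you would actually write down contains a concrete error: you assert that undoing the rescaling ``multiplies measures by $\approx (n+1)^2$, not $(n+1)^{-2}$, because we are pulling back.'' This is backwards, and it contradicts your own first paragraph. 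The map $x\mapsto \frac{1}{x}-n$ stretches the window, of length $\frac{1}{n(n+1)}$, onto an interval of length $1$, so it \emph{expands} measure by $\approx (n+1)^2$; pulling a measure bound back to the original coordinate therefore multiplies it by $\approx (n+1)^{-2}$. With the factor as you state it, the computation gives $4\psi(l)\psi(l')\cdot(n+1)^2 = 4ll'\delta^2(n+1)^6$ rather than the claimed $8ll'\delta^2(n+1)^2$. The fix is a single sign flip in the exponent, after which your plan goes through, modulo the distortion bookkeeping you already flag (the image of a $\delta$-interval under $x\mapsto 1/x$ has radius between $\delta n^2$ and $\delta(n+1)^2$ and is not exactly centred at $\frac{i}{l}$, so one must pass to a slightly larger containing interval before applying Theorem~\ref{D-S_esti}).

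For comparison, the paper's proof avoids the change of variables entirely. It uses the elementary inequality
\[
\left|\frac{l}{ln+i}-\frac{l'}{l'n+i'}\right| = \frac{\left|li'-l'i\right|}{(ln+i)(l'n+i')} \geq \frac{1}{(n+1)^2}\left|\frac{i}{l}-\frac{i'}{l'}\right|,
\]
so that two component intervals of $F_l(\delta)$ and $F_{l'}(\delta)$ can meet only if $\left|\frac{i}{l}-\frac{i'}{l'}\right|\leq 2(n+1)^2\delta$. It then applies Lemma~1 of Duffin--Schaeffer --- the pair-counting lemma, not the measure estimate of Theorem~\ref{D-S_esti} --- to bound the number of such pairs by $4ll'(n+1)^2\delta$, and notes that each pair contributes at most $2\delta$ to the intersection. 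This is the same Diophantine input in counting form, and it sidesteps all of the Jacobian and distortion issues that make your version delicate.
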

	
	\begin{proof}
		Fix $l \neq l'$ and $1 \leq i \leq l-1$ and $1 \leq i' \leq l'-1$.  Notice that
	\[
	\frac{1}{(n+1)^2}\absolutevalues{\frac{i}{l} -\frac{i'}{l'}} =	\frac{\absolutevalues{li'-l'i}}{ll'(n+1)^2} \leq \absolutevalues{\frac{li'-l'i}{(ln+i)(l'n+i')}} = \absolutevalues{\frac{l}{ln+i} - \frac{l'}{l'n+i'}}.
		\]
Thus, to bound $\mu{(F_l(\delta) \cap F_{l'}(\delta))}$, it suffices to estimate how many pairs   $\brackets{i,i'}$ satisfy   
		\[
		\absolutevalues{\frac{i}{l} -\frac{i'}{l'}} \leq 2(n+1)^2\delta.
		\]
		By \cite[Lemma 1]{DS1941}, we see that  the number of choices of $(i,i')$ is at most $4ll'(n+1)^2\delta$. Thus the total measure is no more than $8ll'(n+1)^2\delta^2$.
	\end{proof}
	
	We write  $M_r(X)$ to denote the maximal cardinality of an $r$-separated subset of a bounded set $X$.  The following lemma shows that we may interchange $M_r$ and $N_r$  in the definition of the Assouad spectrum.
	
	\begin{lemma}[Doubling property]\label{doubling_prop}
		Fix a non-empty bounded set $X \subset \RR^2$.  Then
		\[
		N_r(C(x,R) \cap X) \approx N_{2r}(C(x,R) \cap X) \approx M_{r}(C(x,R) \cap X)
		\]
		for all  $0 < r < R$ and  $x \in \RR^2$.
	\end{lemma}

	We now fix $0 < \theta < 1$ and  a sufficiently large integer $n$. The following lemma shows that the shortest horizontal gap on every line $S_{l}$ in the strips of $S_{\delta_n(\theta), \theta}(k)$ where $\ceil{\delta_n(\theta)^{-1/3}} \leq k \leq \floor{\delta_n(\theta)^{-1/2}}$ is larger than $\delta_n(\theta)$. 
	
	\begin{lemma}[Horizontal gap estimate of collapsed strips]
		Fix  $0 < \theta < \frac{2}{3}$,  sufficiently large $n$ and $\ceil{\delta_n(\theta)^{-1/3}} \leq k \leq \floor{\delta_n(\theta)^{-1/2}}$. For all $L'_{\delta_n(\theta)}(k+1) + 1 \leq  l \leq L_{ \delta_n (\theta), \theta}(k)$, we have
		\[
		\frac{1}{l(n+1)^2} \geq \frac{\delta_n(\theta)^{\frac{2}{3}}}{n+1} \geq \delta_n(\theta).
		\]
	\end{lemma}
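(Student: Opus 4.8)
The plan is to peel off the two inequalities in the displayed chain and verify each one directly from the definitions; there is no real difficulty here, only careful bookkeeping with floors and exponents. Abbreviate $\delta = \delta_n(\theta) = \brackets{1/(n(n+1))}^{1/\theta}$. Since every $l$ occurring in the strip $S_{\delta_n(\theta),\theta}(k)$ satisfies $l \le L'_{\delta_n(\theta)}(k)$ (the upper limit of the union in \eqref{A_spe_strip_2}), and since the quantity $\frac{1}{l(n+1)^2}$ is decreasing in $l$, it suffices to establish the two claims
\[
\text{(i)}\quad L'_{\delta}(k) \le \frac{1}{(n+1)\delta^{2/3}}, \qquad\qquad \text{(ii)}\quad \delta^{-1/3} \ge n+1 ,
\]
which are respectively equivalent to $\frac{1}{L'_\delta(k)(n+1)^2} \ge \frac{\delta^{2/3}}{n+1}$ and $\frac{\delta^{2/3}}{n+1} \ge \delta$.

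For (i) I would use the definition $L'_\delta(k) = \floor{\frac{1}{k(n+1)\delta}} \le \frac{1}{k(n+1)\delta}$ together with the hypothesis $k \ge \ceil{\delta^{-1/3}} \ge \delta^{-1/3}$, so that $\frac{1}{k} \le \delta^{1/3}$. This gives $L'_\delta(k) \le \frac{1}{k(n+1)\delta} \le \frac{\delta^{1/3}}{(n+1)\delta} = \frac{1}{(n+1)\delta^{2/3}}$, as required; this step is valid for every $n$ and does not use the upper restriction $k \le \floor{\delta^{-1/2}}$. For (ii), substituting the definition of $\delta$ turns the claim into $(n(n+1))^{1/(3\theta)} \ge n+1$, equivalently $n \ge (n+1)^{3\theta - 1}$. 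Here the hypothesis $\theta < \frac{2}{3}$ is exactly what is used: it forces $3\theta - 1 < 1$, so $(n+1)^{3\theta-1} = o(n)$ as $n \to \infty$ (and is trivially bounded if $3\theta - 1 \le 0$), whence the inequality holds for all sufficiently large $n$.

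The only point that requires any attention is that the range $\theta < \frac{2}{3}$ and the `sufficiently large $n$' hypothesis are consumed entirely in step (ii), while step (i) is a one-line substitution; I do not expect any genuine obstacle.
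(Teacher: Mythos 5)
Your proof is correct and follows the same route as the paper, whose entire proof is the one-line observation that $n+1 \leq \delta_n(\theta)^{-1/3} = (n(n+1))^{1/(3\theta)}$ for large $n$ when $\theta < 2/3$; you simply make explicit the (routine) first half, namely that $l \leq L'_{\delta}(k) \leq \frac{1}{k(n+1)\delta} \leq \frac{1}{(n+1)\delta^{2/3}}$ via $k \geq \delta^{-1/3}$. No gaps.
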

	
	\begin{proof}
		This  follows immediately from $n+1 \leq \delta_n(\theta)^{-1/3} = (n(n+1))^{\frac{1}{3\theta}}.$
	\end{proof}

	\begin{lemma}[Covering in collapsed strips]\label{A_spe_covering_lower_bound}
		Fix sufficiently large $n$,   $\theta \in (0,\frac{2}{3})$ and $\ceil{\delta_n(\theta)^{-1/3}} \leq k \leq \floor{\delta_n(\theta)^{-1/2}}$.  Then 
		\[
		N_{\delta_n(\theta)}(S_{\delta_n(\theta), \theta}(k)) \gtrsim \frac{\mu(F_{\delta_n(\theta), \theta}(k))}{\delta_n(\theta)}
		\]
		where $S_{\delta_{n}, \theta}(k)$ and $F_{\delta_{n}, \theta}(k)$ are as in (\ref{A_spe_strip_2}).
	\end{lemma}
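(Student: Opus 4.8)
The plan is to follow the proof of the box-dimension analogue, Lemma~\ref{dim_B_cover_esti}, now with the family $\{F_l(\delta_n(\theta))\}_l$ in place of $\{E_m\}_m$. Write $\delta = \delta_n(\theta)$ and $P = \proj_x(S_{\delta_n(\theta),\theta}(k))$. Unwinding the definitions \eqref{A_spe_F_S_l,n} and \eqref{A_spe_strip_2}, the set $F_{\delta_n(\theta),\theta}(k) = \bigcup_l F_l(\delta)$ is precisely the closed $\delta$-neighbourhood $\{x\in[0,1] : \mathrm{dist}(x,P)\le\delta\}$ of the finite set $P$, since taking a neighbourhood commutes with union and $P=\bigcup_l \proj_x(S_l)$. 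Since orthogonal projection onto the $x$-axis is $1$-Lipschitz, every cover of $S_{\delta_n(\theta),\theta}(k)$ by cubes of side $\delta$ projects to a cover of $P$ by intervals of length $\delta$, whence $N_\delta(S_{\delta_n(\theta),\theta}(k)) \ge N_\delta(P)$, and it suffices to prove $N_\delta(P) \gtrsim \mu(F_{\delta_n(\theta),\theta}(k))/\delta$.

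To do this I would pass to a $\delta$-separated subset. Choose $\{p_1,\dots,p_s\}\subseteq P$ that is $\delta$-separated and maximal with respect to inclusion; by maximality every point of $P$ lies within $\delta$ of some $p_i$, so $P\subseteq\bigcup_{i=1}^{s}[p_i-\delta,p_i+\delta]$ and therefore $F_{\delta_n(\theta),\theta}(k)\subseteq\bigcup_{i=1}^{s}[p_i-2\delta,p_i+2\delta]$. Taking Lebesgue measure, $\mu(F_{\delta_n(\theta),\theta}(k))\le 4\delta s$. On the other hand, any interval of length $\delta$ meets at most two points of the $\delta$-separated set $\{p_1,\dots,p_s\}$, so $N_\delta(P)\ge N_\delta(\{p_1,\dots,p_s\})\ge s/2\ge \mu(F_{\delta_n(\theta),\theta}(k))/(8\delta)$, as required. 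Alternatively one may repeat the connected-component argument of Lemma~\ref{dim_B_cover_esti} almost verbatim: decompose $F_{\delta_n(\theta),\theta}(k)$ into its component intervals $I_1,\dots,I_M$, note that consecutive points of $P$ inside a given $I_j$ are at most $2\delta$ apart, deduce by a greedy covering estimate that $N_\delta(P\cap I_j)\gtrsim\mu(I_j)/\delta$, and sum over $j$.

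I do not expect a genuine obstacle here, since this is the collapsed-strip counterpart of Lemma~\ref{dim_B_cover_esti}. The two small points that need care are the passage from the two-dimensional covering number of $S_{\delta_n(\theta),\theta}(k)$ to the one-dimensional covering number of its projection $P$ (which costs nothing, as projection never increases covering numbers) and the identification of $\bigcup_l F_l(\delta)$ with the $\delta$-neighbourhood of $P$ (immediate from \eqref{A_spe_F_S_l,n}). The restrictions on $k$ and $\theta$ are not needed for this lemma itself — when $L'_{\delta_n(\theta)}(k+1)+1>L'_{\delta_n(\theta)}(k)$ the strip is empty and both sides vanish — but are recorded because this is the range in which the estimate will later be combined with Proposition~\ref{local_D-S_esti} and the \CE inequality to bound $\mu(F_{\delta_n(\theta),\theta}(k))$ from below.
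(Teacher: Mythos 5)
Your proof is correct, and the underlying idea is the same as the paper's: relate the two-dimensional covering number to a one-dimensional packing/covering count of the projection, and relate that count to $\mu(F_{\delta_n(\theta),\theta}(k))$ via the observation that this set is the $\delta$-neighbourhood of $\proj_x(S_{\delta_n(\theta),\theta}(k))$. Where you differ is in how the reduction from the graph to its projection is handled: you go \emph{downwards}, using that projection is $1$-Lipschitz so that $N_\delta(S_{\delta_n(\theta),\theta}(k)) \geq N_\delta(\proj_x(S_{\delta_n(\theta),\theta}(k)))$, whereas the paper goes \emph{upwards}, lifting a maximal $\delta$-separated subset of the projection to points $(p_i/q_i,1/q_i)$ of the graph which then require pairwise disjoint cubes of side $\tfrac14\delta_n(\theta)$, and invoking the doubling property (Lemma \ref{doubling_prop}) to absorb the factor $\tfrac14$. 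Your version is cleaner and avoids the doubling step entirely. Likewise, your measure bound $\mu(F_{\delta_n(\theta),\theta}(k))\leq 4\delta s$ via the maximal separated set replaces the paper's component-interval decomposition with the gap estimate (the route of Lemma \ref{dim_B_cover_esti}), though as you note the two are interchangeable. Your closing remark is also accurate: the stated range of $k$ plays no role in this lemma's proof and is only relevant when the bound is later combined with the Duffin--Schaeffer and Chung--Erd\H{o}s estimates.
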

	
	\begin{proof}
		By the doubling property, it suffices to prove
		\[
		N_{\frac{1}{4} \delta_n(\theta)}(S_{\delta_n(\theta), \theta}(k)) \gtrsim \frac{\mu(F_{\delta_n(\theta),\theta}(k))}{\delta_n(\theta)} .
		\]
		
		It follows from the definition of $F_{\delta_n(\theta), \theta}(k)$ that $F_{\delta_n(\theta), \theta}(k)$ is a finite union of disjoint intervals, namely, 
		$F_{\delta_n(\theta), \theta}(k) = \bigcup_{j = 1}^J I_j.$  
		For any interval $I_j$, the horizontal gap of consecutive points in 
		$\proj_x \brackets{S_{\delta_n(\theta), \theta}(k)} \cap I_j$ 
		is no more than $2\delta_n(\theta)$.
		Thus 
		$N_{\delta_n(\theta)}(I_j) \geq \frac{\mu(I_j)}{4\delta_n(\theta)},$
		and
		\[
		N_{\delta_n(\theta)}(\proj_x\brackets{S_{\delta_n(\theta), \theta}(k))} \geq \frac{\mu(F_{\delta_n(\theta), \theta}(k))}{4\delta_n(\theta)}.
		\]
		Thus  it suffices to prove
		\[
		N_{\frac{1}{4} \delta_n(\theta)}(S_{\delta_n(\theta), \theta}(k)) \gtrsim N_{\delta_n(\theta)}(\proj_x \brackets{S_{\delta_n(\theta), \theta}(k))}.
		\]
		To prove this, again by the doubling property, it suffices to prove
		\[
		N_{\frac{1}{4} \delta_n(\theta)}(S_{\delta_n(\theta), \theta}(k)) 
		\geq M_{\delta_n(\theta)}(\proj_x \brackets{S_{\delta_n(\theta), \theta}(k)})
		\geq N_{\delta_n(\theta)}(\proj_x \brackets{S_{\delta_n(\theta), \theta}(k)})
		\]
		where the notation is as in (\ref{A_spe_strip_2}). 
		
		It follows from the doubling property that
		\[
		M_{\delta_n(\theta)}(\proj_x \brackets{S_{\delta_n(\theta), \theta}(k)})
		\geq N_{\delta_n(\theta)}(\proj_x \brackets{S_{\delta_n(\theta), \theta}(k)}),
		\]
		thus for all  integers $M \leq N_{\delta_n(\theta)}(\proj_x \brackets{S_{\delta_n(\theta), \theta}(k)})$, there exist $M$ points in $\proj_x \brackets{S_{\delta_n(\theta), \theta}(k)}$, denoted by $\bigbrackets{x_1, \dots, x_M}$, such that the horizontal distance of each points is larger than $\delta_n(\theta)$. For each $x_i = \frac{p_i}{q_i}$ where $\gcd{(p_i,q_i)} = 1$, we require a closed cube of  side length $\frac{1}{4} \delta_n(\theta)$  to cover $\brackets{\frac{p_i}{q_i}, \frac{1}{q_i}}$, and  cubes used in the resulting cover of cover $\bigbrackets{\brackets{\frac{p_i}{q_i}, \frac{1}{q_i}}}_{i=1}^M$ are disjoint.  Therefore $
		N_{\frac{1}{4}\delta_n(\theta)}(S_{\delta_n(\theta), \theta}(k)) \geq M,
$
		which implies that
		\[
		N_{\frac{1}{4} \delta_n(\theta)}(S_{\delta_n(\theta), \theta}(k)) 
		\geq M_{\delta_n(\theta)}(\proj_x \brackets{S_{\delta_n(\theta), \theta}(k)})
		\]
		and the result holds.
	\end{proof}
	
	\subsection{Proof of Theorem \ref{A_spe_result}}
	
	To prove Theorem \ref{A_spe_result}, it suffices to prove the following lemma.
	
	\begin{lemma}\label{A_spe_result_lem}
		For all $0 < \theta < \frac{2}{3}$, $\dim_A^\theta  F  = \dim_A^\theta G_{f} = \frac{\frac{4}{3} - \theta}{1 - \theta}.$
	\end{lemma}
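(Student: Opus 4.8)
The plan is to prove Lemma~\ref{A_spe_result_lem} by establishing matching upper and lower bounds for $\dim_A^\theta$, writing $s := \frac{4/3-\theta}{1-\theta}$. Since $G_f \subseteq F$ forces $\dim_A^\theta G_f \leq \dim_A^\theta F$, it is enough to prove $\dim_A^\theta F \leq s$ and $\dim_A^\theta G_f \geq s$. For a box $C(x,R)$ put $r := R^{1/\theta}$, and record the elementary identity $R^{(1-1/\theta)s} = r^{\theta-4/3}$, so that the target upper estimate reads $N_r(C(x,R)\cap F) \lesssim r^{\theta-4/3}$; note that $\theta < 2/3$ is exactly the condition under which $r^{\theta-4/3}$ dominates both $r^{\theta-1}$ and $r^{-\theta}$.

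For the upper bound, fix $C(x,R)$ with $R$ small (for $R$ bounded below the estimate is trivial, covering $[0,1]^2$). First remove the bottom slab of the box of height $r$; being only $r$ thick it is covered by $\lesssim R/r = r^{\theta-1}$ cubes, however many points of $F$ it contains. For the rest, sort the points $(p/q,1/q)$ of $F$ by the dyadic scale $2^{-j}R$ of their height: at scale $j$ there are $\lesssim 2^{2j}/R$ of them, lying in a strip of width $R$ and height $\lesssim 2^{-j}R$, hence coverable by $\lesssim \min(2^{2j}/R,\, 2^{-j}R^2/r^2)$ cubes of side $r$. The two terms balance at $2^j \approx R/r^{2/3}$, which lies in the admissible range precisely because $\theta < 2/3$, and summing over $j$ yields $\lesssim R/r^{4/3} = r^{\theta-4/3}$. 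A high box, with $x_2 \geq R$, is easier: it meets $F$ in only $\lesssim R/x_2^2 + 1 \leq R^{-1}+1 \lesssim r^{-\theta} \lesssim r^{\theta-4/3}$ points. This gives $\dim_A^\theta F \leq s$.

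For the lower bound we reproduce the argument of Section~\ref{proof1} in the collapsed picture. Fix $\eps > 0$ and a large integer $n$, put $R = \frac{1}{n(n+1)}$ and $r = \delta := \delta_n(\theta) = R^{1/\theta}$, and take $B = [\tfrac1{n+1},\tfrac1n]\times[0,2R]$, a box of side $\approx R$. The first task is to choose a small $\eps' = \eps'(\theta) > 0$ so that $B \cap G_f \supseteq \bigcup_k S_{\delta,\theta}(k)$, the union over $\ceil{\delta^{-1/3}} \leq k \leq \floor{\delta^{-1/3-\eps'}}$; this amounts to verifying, for every $k$ in this range, three conditions simultaneously: (i) $S_{\delta,\theta}(k)$ fits inside $B$, i.e.\ its points have height $\lesssim R$, forcing $k \lesssim \delta^{\theta-1}$; (ii) the collapsed analogues of Lemma~\ref{integer_in_strip_k} and Corollary~\ref{cor_integer_in_strip_k}, namely $L'_\delta(k)-L'_\delta(k+1)\approx \frac{1}{(n+1)\delta k^2}$ and $\sum_{L'_\delta(k+1)<l\leq L'_\delta(k)} l \approx \frac{1}{(n+1)^2\delta^2 k^3}$, remain valid, forcing $k \lesssim ((n+1)\delta)^{-1/2}$; and (iii) the tail sum $\sum_{k\geq\delta^{-1/3}} k^{-3}\approx\delta^{2/3}$ is not spoiled, which only needs $\delta^{-1/3-\eps'}\gtrsim\delta^{-1/3}$. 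Using $n+1\approx\delta^{-\theta/2}$ one checks that (i)--(iii) are jointly satisfiable for $\eps'$ small exactly when $\theta < 2/3$; this is the only use of that hypothesis in the lower bound.

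With the set-up in place, observe that $S_{\delta,\theta}(k)$ lies in the wedge between the rays $y=x/L'_\delta(k)$ and $y=x/(L'_\delta(k+1)+1)$ through the origin: these wedges are essentially disjoint and, over the block $[\tfrac1{n+1},\tfrac1n]$, each strip is only $\approx\delta$ thick on any vertical line, so a $\delta$-cube meets boundedly many of the $S_{\delta,\theta}(k)$ and hence $N_\delta(B\cap G_f)\gtrsim\sum_k N_\delta(S_{\delta,\theta}(k))$. For fixed $k$, Lemma~\ref{A_spe_covering_lower_bound} reduces $N_\delta(S_{\delta,\theta}(k))$ to a lower bound for $\mu(F_{\delta,\theta}(k))=\mu\big(\bigcup_l F_l(\delta)\big)$, which one obtains from the \CE inequality exactly as in (\ref{dim_B_lower_bound_1})--(\ref{dim_B_lower_bound_3}): bound $\big(\sum_l\mu(F_l(\delta))\big)^2$ below using $\mu(F_l(\delta))\gtrsim\phi(l)\delta\gtrsim \frac{l\delta}{\log\log l}$ (the horizontal gap estimate for collapsed strips keeps the defining intervals essentially disjoint, then Theorem~\ref{totient_funcion}) together with (ii), and bound the off-diagonal of the denominator by $\lesssim\delta^2(n+1)^2\big(\sum_l l\big)^2$ using the Local \DS estimate (Proposition~\ref{local_D-S_esti}); this yields $N_\delta(S_{\delta,\theta}(k))\gtrsim(\log\log)^{-2}(n+1)^{-2}\delta^{-2}k^{-3}$. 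Summing over $k$, using (iii) and $(n+1)^2\approx R^{-1}=\delta^{-\theta}$, and absorbing the $(\log\log)^{-2}$ into a factor $\delta^{\eps''}$ with $\eps''$ small, we get $N_\delta(B\cap G_f)\gtrsim\delta^{\theta-4/3+\eps''}$; since $(R/r)^{s-\eps}=\delta^{\theta-4/3+(1-\theta)\eps}$, choosing $\eps''<(1-\theta)\eps$ gives $N_{R^{1/\theta}}(B\cap G_f)\gtrsim(R/r)^{s-\eps}$ for all large $n$, hence $\dim_A^\theta G_f\geq s-\eps$, and letting $\eps\to0$ completes the proof. The main obstacle, and where I would spend the most care, is the third paragraph: pinning down the window of $k$ (equivalently, the exponent $\eps'$) in which conditions (i)--(iii) coexist for every $\theta\in(0,2/3)$, and then justifying $N_\delta(B\cap G_f)\gtrsim\sum_k N_\delta(S_{\delta,\theta}(k))$ in spite of the vertical overlap of the strips --- which is resolved by their shearing into disjoint wedges through the origin.
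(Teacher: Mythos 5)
Your proposal is correct and follows essentially the same route as the paper: the upper bound is the same cardinality-versus-trivial-cover count balanced at height $R^{2/(3\theta)}=r^{2/3}$ (merely organised dyadically), and the lower bound uses the same box $\left[\frac{1}{n+1},\frac{1}{n}\right]\times[0,O(R_n)]$ at scale $\delta_n(\theta)=R_n^{1/\theta}$, the collapsed strips $S_{\delta,\theta}(k)$, Lemma \ref{A_spe_covering_lower_bound}, the Chung--Erd\H{o}s inequality, the local Duffin--Schaeffer estimate and the totient bound, with the sum over $k$ dominated by $k\approx\delta^{-1/3}$. If anything you are more careful than the paper on two points it passes over: you restrict $k$ so that the strips genuinely lie inside the box (the paper's stated inclusion for all $k\le\delta^{-1/2+\eps}$ actually fails when $\theta>\frac12$, harmlessly, since only $k$ near $\delta^{-1/3}$ matters), and you justify the superadditivity $N_\delta\bigl(\bigcup_k S_{\delta,\theta}(k)\bigr)\gtrsim\sum_k N_\delta\bigl(S_{\delta,\theta}(k)\bigr)$ via the disjoint-wedge observation rather than asserting it.
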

	If Lemma \ref{A_spe_result_lem} holds, then by the continuity of the Assouad spectrum, we have $	\dim_A^{\frac{2}{3}}  F  = \dim_A^{\frac{2}{3}} G_{f} = 2$, and then by \cite[Theorem 3.3.1]{F2020} we have that for all $\frac{2}{3} \leq \theta < 1$, $\dim_A^\theta  F  = \dim_A^\theta G_{f} = 2$.
	
	\begin{proof}[Proof of Upper Bound in Lemma \ref{A_spe_result_lem}]
		Fix $0 < \theta < \frac{2}{3}$.  It suffices to bound the Assouad spectrum of the full popcorn set $F$ from above. Let $R \in (0,1)$ and consider $R^{1/\theta}$ covers of an arbitrary $R$-square $C  = [x,x+R] \times [y,y+R]$ intersecting $F$.  Similar to the upper bound for the box dimension, we decompose $F \cap C $ into two regions, which we deal with  separately.  It follows by a simple cardinality estimate  that
		\begin{align*}
		& N_{R^{1/\theta}} \left( F  \cap \brackets{[x,x+R] \times [y+R^{2/(3 \theta)},y+R]}\right) \\ 
& \leq \# F  \cap \brackets{[x,x+R] \times [y+R^{2/(3 \theta)},y+R]} \\
&\lesssim R \cdot \left(R^{-2/(3 \theta)}\right)^2 = R^{1-4/(3\theta)}.
			\end{align*}
	For the remaining part of $F\cap C $
	\begin{align*}
		N_{R^{1/\theta}} \left( F  \cap \brackets{[x,x+R] \times [y,y+R^{2/(3 \theta)}]}\right) & \leq N_{R^{1/\theta}} \left(   [x,x+R] \times [y,y+R^{2/(3 \theta)}]\right)  \\
&\lesssim \left( \frac{R}{R^{1/\theta}}\right) \cdot \left(\frac{R^{2/(3 \theta)}}{R^{1/\theta}} \right) = R^{1-4/(3\theta)}
			\end{align*}
	proving $\dim_A^\theta  F \leq  \frac{\frac{4}{3} - \theta}{1 - \theta}$	as required.
	\end{proof}

	\begin{proof}[Proof of Lower Bound in Lemma \ref{A_spe_result_lem}]
		
		Fix $0 < \theta < \frac{2}{3}$. For  sufficiently large $n$, write  $R_n = \frac{1}{n(n+1)}$, $x_n = \brackets{\frac{1}{n+1}, 0}$,  $\delta_n(\theta) = R_n^{1/\theta}$ (see (\ref{delta_n_theta})) and
		\[
		C(x_n,R_n) = \medbrackets{\frac{1}{n+1}, \frac{1}{n}} \times \medbrackets{0, \frac{1}{n(n+1)}}.
		\]
		Hence, by the doubling property, it suffices to prove that  for all sufficiently small $\eps > 0$, and all sufficiently large $n$, we have
		\begin{equation}\label{A_spe_lower_bound_1}
		N_{R_n^{1/\theta}}(C(x_n, R_n) \cap G_f) \gtrsim  R_n^{-\frac{4}{3\theta} + 1 + \frac{6\eps}{\theta}}.
		\end{equation}
		Fix $\eps > 0$.  For sufficiently large $n$, it follows from the definition of $C(x_n, R_n) \cap G_f$ that
		\[
		\bigcup_{k=\ceil{\delta_n(\theta)^{-1/3}}}^{\floor{\delta_n(\theta)^{-1/2 + \eps}}} S_{\delta_n(\theta),\theta}(k) \subset C(x_n, R_n) \cap G_f
		\]
		where $S_{\delta_n(\theta), \theta}(k)$ is as in (\ref{A_spe_strip_2}).  We now use the local \DS estimate (Proposition \ref{local_D-S_esti}) and the \CE inequality (Theorem \ref{C-E_ine}) to estimate the covering number of each $S_{\delta_n(\theta), \theta}(k)$.  Fix $\ceil{\delta_n(\theta)^{-1/3}} \leq k \leq \floor{\delta_n(\theta)^{-1/2 + \eps}}$.  There exists an integer $l$ such that  
\[
		k\delta_n(\theta) \leq \frac{1}{l(n+1)} < (k+1)\delta_n(\theta)
\]
and therefore
\[
		L'_{\delta_n(\theta)}(k+1) + 1 \leq  l \leq L'_{\delta_n(\theta)}(k)
		\]
		where $L'_{\delta_n(\theta)}$ is as in  (\ref{A_spe_strip_2}).  It follows from Lemma \ref{A_spe_covering_lower_bound} , and the \CE Inequality (Theorem \ref{C-E_ine}) that
		\begin{equation}\label{A_spe_lower_bound_2}
		\begin{aligned}
		N_{R_n^{1/\theta}} (S_{\delta_n(\theta),\theta}(k))&  \gtrsim \frac{1}{\delta_n(\theta)} \cdot \mu(F_{\delta_n(\theta), \theta}(k)) \\
		& \gtrsim \frac{1}{\delta_n(\theta)}\frac{\brackets{\sum\limits_{l = L'_{\delta_n(\theta)}(k+1) + 1}^{L'_{\delta_n(\theta)}(k)} \mu(F_l(\delta_n(\theta)))}^2}{\mathop{\sum\sum}\limits_{l,l' = L'_{\delta_n(\theta)}(k+1) + 1}^{L'_{\delta_n(\theta)}(k)} \mu(F_l(\delta_n(\theta)) \cap F_{l'}(\delta_n(\theta)))}
		\end{aligned}
		\end{equation}
		where $F_l(\delta_n(\theta))$ is as in  (\ref{A_spe_strip_1}), and $S_{\delta_n(\theta),\theta}(k)$ is as in  (\ref{A_spe_strip_2}). 	We first estimate the numerator in the final expression  in \eqref{A_spe_lower_bound_2}. For all $l$,
		$ \mu(F_l(\delta_n(\theta))) \geq \delta_n(\theta) \cdot \phi(l) $,
		where $\mu$ is the Lebesgue measure on $[0,1]$ and $\phi(l)$ is the Euler totient function, see (\ref{totient_funcion}).  It follows from Theorem \ref{totient_funcion} and $\ceil{\delta_n(\theta)^{-1/3}} \leq k \leq \floor{\delta_n(\theta)^{-1/2 + \eps}}$ that for sufficiently large $n$, and all  $L'_{\delta_n(\theta)}(k+1) + 1 \leq  l \leq L'_{\delta_n(\theta)}(k)$, we have
		\[
		\log \log l \leq \log \log \frac{1}{k(n+1)\delta_n(\theta)} 
		\leq \brackets{\frac{1}{k(n+1)\delta_n(\theta)}}^\eps 
		\lesssim \delta_n(\theta)^{-2\eps}.
		\]
		Therefore  it follows from $\mu(F_l(\delta_n(\theta))) \gtrsim \delta_n(\theta) \cdot \phi(l) \gtrsim \delta_n(\theta) \cdot l \cdot (\log \log l)^{-1}$ for all large $l$ that
 \begin{align*}
		\sum\limits_{l = L'_{\delta_n(\theta)}(k+1) + 1}^{L'_{\delta_n(\theta)}(k)} \mu(F_l(\delta_n(\theta)))
		& \gtrsim \delta_n(\theta) \sum\limits_{l = L'_{\delta_n(\theta)}(k+1) + 1}^{L'_{\delta_n(\theta)}(k)} l \cdot \brackets{\log \log l}^{-1} \\
		& \gtrsim \delta_n(\theta) \sum\limits_{l = L'_{\delta_n(\theta)}(k+1) + 1}^{L'_{\delta_n(\theta)}(k)} l \cdot \brackets{\log \log \frac{1}{k(n+1)\delta_n(\theta)}}^{-1} \\
		&  \gtrsim \delta_n(\theta)^{ 1+2 \eps}\sum\limits_{l = L'_{\delta_n(\theta)}(k+1) + 1}^{L'_{\delta_n(\theta)}(k)}  l.
		\end{align*}
		We next estimate the denominator in \eqref{A_spe_lower_bound_2}.   By splitting the sum and then applying the local \DS estimate (Proposition \ref{local_D-S_esti}) to  the first sum and the trivial estimate to the second, we have
		\begin{align*}
		& \mathop{\sum\sum}\limits_{l, l' = L'_{\delta_n(\theta)}(k+1) + 1}^{L'_{\delta_n(\theta)}(k)}  \mu(F_l(\delta_n(\theta)) \cap F_{l'}(\delta_n(\theta))) \\
		& = \sum\limits_{l, l' = L'_{\delta_n(\theta)}(k+1) + 1, l \neq l'}^{L'_{\delta_n(\theta)}(k)}  \mu(F_l(\delta_n(\theta)) \cap F_{l'}(\delta_n(\theta)))
		 \ + \sum\limits_{l = L'_{\delta_n(\theta)}(k+1) + 1}^{L'_{\delta_n(\theta)}(k)}  \mu(F_l(\delta_n(\theta))) \\
		&  \lesssim n(n+1) \delta_n(\theta)^2 \cdot \sum\limits_{l, l' = L'_{\delta_n(\theta)}(k+1) + 1, l \neq l'}^{L'_{\delta_n(\theta)}(k)}  ll'   \ + \ \delta_n(\theta) \sum\limits_{l = L'_{\delta_n(\theta)}(k+1) + 1}^{L'_{\delta_n(\theta)}(k)}  l \\
		& \lesssim n(n+1) \delta_n(\theta)^2 \brackets{\sum\limits_{l = L'_{\delta_n(\theta)}(k+1) + 1}^{L'_{\delta_n(\theta)}(k)}  l}^2 \  + \  \delta_n(\theta) \brackets{\sum\limits_{l = L'_{\delta_n(\theta)}(k+1) + 1}^{L'_{\delta_n(\theta)}(k)}  l}.
		\end{align*}
		It follows from (\ref{square_esti}) that
		\[
		\sum\limits_{l = L'_{\delta_n(\theta)}(k+1) + 1}^{L'_{\delta_n(\theta)}(k)} l \approx L'_{\delta_n(\theta)}(k)^2 - \brackets{L'_{\delta_n(\theta)}(k+1) + 1}^2 \approx \frac{1}{k^3n(n+1)\delta_n(\theta)^2}.
		\]
		Thus (\ref{A_spe_lower_bound_2}) yields
		\begin{align*}
 N_{\delta_n(\theta)} (S_{\delta_n(\theta),\theta}(k)) & \gtrsim \frac{1}{\delta_n(\theta)} \cdot \frac{\delta_n(\theta)^{2+ 4\eps} \brackets{\sum\limits_{l = L'_{\delta_n(\theta)}(k+1) + 1}^{L'_{\delta_n(\theta)}(k)} l}^2}{n^2\delta_n(\theta)^2 \brackets{\sum\limits_{l = L'_{\delta_n(\theta)}(k+1) + 1}^{L'_{\delta_n(\theta)}(k)} l}^2 + \delta_n(\theta) \brackets{\sum\limits_{l = L'_{\delta_n(\theta)}(k+1) + 1}^{L'_{\delta_n(\theta)}(k)} l}}\\
		& \gtrsim \delta_n(\theta)^{-1+ 4\eps} \cdot \frac{1}{n^2} \cdot \frac{1}{k^3\delta_n(\theta)+1} \gtrsim \delta_n(\theta)^{4\eps} \cdot \frac{1}{n(n+1)} \cdot \frac{1}{k^3\delta_n^2(\theta)}.
		\end{align*}
		Then summing over $k$ from $\ceil{\delta_n(\theta)^{-1/3}}$ to $\floor{\delta_n(\theta)^{-1/2 + \eps}}$, it follows from the previous  estimate, Lemma \ref{doubling_prop} and   (\ref{square_esti}) that
		\begin{align*}
		N_{\delta_n(\theta)}(C(x_n, R_n) \cap G_f) & \geq N_{\delta_n(\theta)}\brackets{\bigcup_{k=\ceil{\delta_n(\theta)^{-1/3}}}^{\floor{\delta_n(\theta)^{-1/2 + \eps}}} S_{\delta_n(\theta),\theta}(k)}\\
		& \gtrsim \sum\limits_{k=\ceil{\delta_n(\theta)^{-1/3}}}^{\floor{\delta_n(\theta)^{-1/2 + \eps}}}  N_{\delta_n(\theta)} \brackets{S_{\delta_n(\theta), \theta}(k)}  \\
		& \gtrsim \delta_n(\theta)^{-\frac{4}{3} + 4\eps} \cdot \frac{1}{n(n+1)}  \gtrsim R_n^{-\frac{4}{3\theta} + \frac{4\eps}{\theta} + 1}.
		\end{align*}
Therefore 
\[
		\dim_A^\theta G_f \geq \frac{\frac{4}{3} - \theta - 4 \eps}{1- \theta}
		\]
		and the result holds by taking $\eps \to 0$.
	\end{proof}

\newpage

\section*{Acknowledgements}

We  thank Andrew Mitchell for helping to predict the box dimension result before we had a proof.  He used a box-counting computer programme to estimate  the covering number for very small $\delta$ and a log-log plot suggested $4/3$ should be the dimension.  We also thank an anonymous referee for making several helpful comments which simplified and improved the paper.
	
H. Chen is thankful for the excellent atmosphere for research provided by the University of St Andrews.   H. Chen was funded by China Scholarship Council (File No. 201906150102) and NSFC (No. 11601161, 11771153 and 11871227).  J. M. Fraser was  supported by an  EPSRC Standard Grant (EP/R015104/1) and a Leverhulme Trust Research Project Grant (RPG-2019-034).   H. Yu was supported by the European Research Council (ERC) under the European Union’s Horizon 2020 research and
	innovation programme (grant agreement No. 803711), and indirectly by Corpus Christi College, Cambridge.

	\bibliographystyle{amsplain}

\end{document}